 \patchcmd{\section}{\scshape}{\bfseries}{}{} \makeatletter
 \renewcommand{\@secnumfont}{\bfseries} \makeatother
\newcommand{\bone}{\mathbf{1}}
\newcommand{\br}{\boldsymbol{r}}
\newcommand{\bt}{\boldsymbol{t}}
\newcommand{\calO}{\mathcal{O}}
\newcommand{\calS}{\mathcal{S}}
\newcommand{\ZZ}{\mathbb{Z}}
\newcommand{\E}{W}
\def\eps{{\varepsilon}}
\DeclarePairedDelimiter{\set}{\lbrace}{\rbrace}
\DeclarePairedDelimiter{\parens}{\lparen}{\rparen}
\DeclarePairedDelimiter{\brackets}{\lbrack}{\rbrack}
\DeclarePairedDelimiter{\abs}{\lvert}{\rvert}
\DeclarePairedDelimiter{\norm}{\lVert}{\rVert}
\theoremstyle{plain}
\newtheorem{theorem}{Theorem}
\theoremstyle{plain}
\newtheorem*{theorem*}{Theorem}
\theoremstyle{plain}
\theoremstyle{plain}
\newtheorem{claim}{Claim}
\theoremstyle{plain}
\newtheorem{lemma}[theorem]{Lemma}
\theoremstyle{plain}
\newtheorem{corollary}[theorem]{Corollary}
\theoremstyle{definition}
\newtheorem{question}[theorem]{Question}
\theoremstyle{definition}
\theoremstyle{definition}
\newtheorem{remark}[theorem]{Remark}
\theoremstyle{definition}
\newtheorem*{remark*}{Remark}
\theoremstyle{plain}
\newtheorem{conjecture}[theorem]{Conjecture}
\title[Counterexamples, covering systems, and zero-one
laws]{Counterexamples, covering systems, and zero-one laws for
  inhomogeneous approximation}
\author[F.~A.~Ram{\'i}rez]{Felipe A.~Ram{\'i}rez}
\thanks{The author was supported by the EPSRC Programme Grant
  EP/J018260/1.}
\address{Wesleyan University, Middletown, CT, USA}
\email{framirez@wesleyan.edu}
\begin{document}

\frenchspacing

\begin{abstract}
  We develop the inhomogeneous counterpart to some key aspects of the
  story of the Duffin--Schaeffer Conjecture~(1941). Specifically, we
  construct counterexamples to a number of candidates for a
  sans-monotonicity version of Sz{\"u}sz's inhomogeneous~(1958) version
  of Khintchine's Theorem~(1924). For example, given any real sequence
  $\set{y_i}$, we build a divergent series of non-negative reals
  $\psi(n)$ such that for any $y\in\set{y_i}$, almost no real number
  is inhomogeneously $\psi$-approximable with inhomogeneous parameter
  $y$. Furthermore, given any second sequence $\set{z_i}$ not
  intersecting the rational span of $\set{1,y_i}$, and assuming a
  dynamical version of Erd\H{o}s' Covering Systems Conjecture~(1950),
  we can ensure that almost every real number is inhomogeneously
  $\psi$-approximable with any inhomogeneous parameter
  $z\in\set{z_i}$. Next, we prove a positive result that is near
  optimal in view of the limitations that our counterexamples
  impose. This leads to a discussion of natural analogues of the
  Duffin--Schaeffer Conjecture and Duffin--Schaeffer Theorem~(1941) in
  the inhomogeneous setting.  As a step toward these, we prove
  versions of Gallagher's Zero-One Law~(1961) for inhomogeneous
  approximation by reduced fractions.
\end{abstract}

\maketitle

{\footnotesize \tableofcontents}

\section{Introduction and results}
\label{sec:introduction-history}

The basic question in (homogeneous) Diophantine approximation is about
approximating real numbers by rational numbers. Given a real number
$x$, how small can $\abs{x - a/n}$ be as a function of $n$, where
$a/n$ is rational? This is the same as asking how small we can make
$\norm{nx}$, the distance from $nx$ to the nearest integer. In
\emph{inhomogeneous} Diophantine approximation we have some other real
number $y$---our inhomogeneous parameter---and we try to minimize
$\norm{nx+y}$.

There are of course innumerable questions one can ask for homogeneous
and inhomogeneous approximation. It often happens that the homogeneous
theory is mirrored in the inhomogeneous setting. For an example of
this, take the homogeneous and inhomogeneous versions of Khinthine's
Theorem, below. Sometimes, techniques for homogeneous approximation
can lead to results for inhomogeneous
approximation. (See, for example,~\cite[Chapter V]{CasselsintrotoDA}.) The reverse
may also happen: inhomogeneous considerations can illuminate facts in
the homogeneous world. A famous example of this is Kurzweil's
Theorem~\cite{Kurzweil}, where badly (homogeneously) approximable real
numbers $x$ are characterized by their behavior with respect to all
possible inhomogeneous expressions $\norm{nx+y}$.

Our goal here is to develop the inhomogeneous counterpart to some of
the narrative underlying one of Diophantine approximation's most
vexing open problems: the Duffin--Schaeffer Conjecture.

\subsection{Homogeneous theory}
\label{sec:homothy}

The following theorem is the foundation of \emph{metric Diophantine
  approximation}. Note that we use the term `approximating function'
to mean a non-negative real-valued function of the natural numbers.

\theoremstyle{plain} \newtheorem*{kt}{Khintchine's Theorem}
\begin{kt}[\cite{Khintchineonedimensional}]
  Let $\psi$ be a non-increasing approximating function. Then almost
  every or almost no real number $x$ satisfies the inequality
  $\norm{nx}<\psi(n)$ with infinitely many integers $n$, according as
  $\sum_n\psi(n)$ diverges or converges.
\end{kt}

A great deal of effort has been devoted to finding a suitable
Khintchine-like statement that would not require the approximating
function to be monotonic. In 1941, Duffin and
Schaeffer~\cite{DuffinSchaeffer} showed that one cannot simply remove
the word `non-increasing' from Khintchine's Theorem. Specifically,
they produced an approximating function $\psi$ that would serve as
counterexample to the divergence part of the resulting
statement. (Theorems~\ref{thm:sequencecounterex}/\ref{thm:moreover}
and~\ref{thm:cantorex} generalize this to the inhomogeneous setting.)
Still, the \emph{Duffin--Schaeffer Counterexample} has the property
that $\sum_n\varphi(n)\psi(n)/n$ converges, where $\varphi$ is Euler's
$\varphi$-function. This led to their formulating what is now one
of the foremost open problems of Diophantine approximation.

\theoremstyle{plain} \newtheorem*{dsc}{Duffin--Schaeffer Conjecture}
\begin{dsc}[\cite{DuffinSchaeffer}] If $\psi$ is an approximating
  function such that the sum $\sum_n\varphi(n)\psi(n)/n$ diverges,
  then almost every $x$ satisfies the inequality $\abs{nx-a}<\psi(n)$
  with infinitely many coprime\footnote{One might
    also consider a version of this conjecture where the word
    `coprime' has been removed. This is of course weaker than the
    stated conjecture. Whether it is easier to prove seems to be an
    unexplored question.} integer pairs $(a,n)$.
\end{dsc}

The Duffin--Schaeffer Conjecture continues to be actively pursued, and
has only been neared by partial and related results. In the original
paper, Duffin and Schaeffer proved the first such partial result.

\theoremstyle{plain} \newtheorem*{dst}{Duffin--Schaeffer Theorem}
\begin{dst}[\cite{DuffinSchaeffer}]
  If $\psi$ is an approximating function such that $\sum_n\psi(n)$
  diverges and
  \begin{equation*}
    \limsup_{N\to\infty}\parens*{\sum_{n=1}^N \frac{\varphi(n)\psi(n)}{n}}\parens*{\sum_{n=1}^N\psi(n)}^{-1}>0,
  \end{equation*}
  then almost every $x$ satisfies the inequality $\abs{nx-a}<\psi(n)$
  with infinitely many coprime integer pairs $(a,n)$.
\end{dst}

Many others have followed. For example, Erd\H{o}s verified the
conjecture for approximating functions that take the value $\eps/n$ on
their support, for some $\eps>0$~\cite{ErdosDS}; Vaaler improved this
to functions of the form $\psi(n)=O(n^{-1})$~\cite{Vaaler}; Pollington
and Vaughan proved that the Duffin--Schaeffer Conjecture holds in
higher dimensions~\cite{PollingtonVaughan}; and recently, Beresnevich,
Haynes, Harman, Pollington, and Velani have proved the conjecture
under ``extra divergence'' assumptions, and Aistleitner has proved it
for ``slow divergence''~\cite{HPVextra, BHHVextraii, AistleitnerDS}.

An important tool for attacks on the Duffin--Schaeffer Conjecture, and
indeed in many other problems in Diophantine approximation, is the
``zero-one law''---a statement precluding that the measure of a set be
anything other than zero or one. The following zero-one law tells us
that the set that is predicted to be full in the Duffin--Schaeffer
Conjecture is either null or full.

\theoremstyle{plain} \newtheorem*{gzol}{Gallagher's Zero-One Law}
\begin{gzol}[\cite{Gallagher01}]
  Let $\psi$ be an approximating function. Then almost every or almost
  no $x$ satisfies the inequality $\abs{nx-a}<\psi(n)$ with infinitely
  many coprime integer pairs $(a,n)$.
\end{gzol}

Modifications of Gallagher's proof yield Theorems~\ref{thm:E01}
and~\ref{thm:eitheror}---inhomogeneous versions where we consider
countably many inhomogeneous parameters simultaneously.

\subsection{Inhomogeneous theory}
\label{sec:inhomothy}

It was later proved by Sz{\"u}sz that an \emph{inhomogeneous} version
of Khintchine's Theorem also holds.

\theoremstyle{plain} \newtheorem*{ikt}{Inhomogeneous Khintchine
  Theorem}
\begin{ikt}[\cite{SzuszinhomKT}]
  Let $y$ be a real number and $\psi$ a non-increasing approximating
  function. Then almost every or almost no real number $x$ satisfies
  the inequality $\norm{nx+y}<\psi(n)$ with infinitely many integers
  $n$, according as $\sum_n\psi(n)$ diverges or converges.
\end{ikt}

Again, one can ask about the possibility of removing the word
`non-increasing' from this theorem. Our first result,
Theorem~\ref{thm:sequencecounterex}, shows that this is impossible by
giving a counterexample to the resulting statement. In fact, it is a
counterexample to much more.

The following theorem shows that if one lets the inhomogeneous
parameter vary, then monotonicity \emph{can} be removed.

\theoremstyle{plain} \newtheorem*{dmkt}{Doubly Metric Inhomogeneous
  Khintchine Theorem}
\begin{dmkt}[\cite{CasselsintrotoDA}]
  Let $\psi$ be an approximating function. Then almost every or almost
  no real pair $(x,y)$ satisfies the inequality $\norm{nx+y}<\psi(n)$
  with infinitely many integers $n$, according as $\sum_n\psi(n)$
  diverges or converges.
\end{dmkt}

One may therefore be tempted to suspect that if we restrict the
inhomogeneous parameter only slightly (instead of letting it vary over
all real numbers as in the above theorem) then we will still retain a
similar statement to the one above, without the need for
monotonicity. For example, we may require the inhomogeneous parameter
to lie on an equidistributed sequence of real numbers, and hope that a
``doubly metric''-style statement will hold for a ``density-one''
subsequence (the idea being that this equidistributed sequence would
be a generic sampling from the full-measure set provided by the Doubly
Metric Inhomogeneous Khintchine Theorem). But this is also ruled out
by our first result. We construct counterexamples for \emph{any} given
sequence of inhomogeneous parameters.

\begin{theorem}\label{thm:sequencecounterex}
  For any sequence $\set{y_i}$ of real numbers there is an
  approximating function $\psi$ such that the sum $\sum_n\psi(n)$
  diverges and such that for any $y\in\set{y_i}$, there are
  \textbf{almost no} real numbers $x$ for which the inequality
  $\norm{nx+ y}< \psi(n)$ is satisfied by infinitely many integers
  $n\geq 1$.
\end{theorem}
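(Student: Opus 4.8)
The plan is to adapt the classical Duffin--Schaeffer Counterexample construction, pushing it through a countable union over the sequence $\set{y_i}$. The idea in the homogeneous case is to concentrate the mass of $\psi$ on highly composite denominators $n$ (or on denominators that are multiples of one another), so that the balls $\set{x : \norm{nx} < \psi(n)}$ for different $n$ overlap heavily rather than spreading out; by a Borel--Cantelli / quasi-independence heuristic this forces the limsup set to be null even though $\sum_n \psi(n)$ diverges. The inhomogeneous version requires the analogous overlap to happen for the translated sets $\set{x : \norm{nx + y} < \psi(n)}$ simultaneously for every $y$ in the given countable list.

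Concretely, I would build $\psi$ in blocks. For each $i$, I will devote a block of denominators to ``killing'' the parameter $y_i$: on that block, $\psi$ is supported on a set of denominators $n$ chosen so that (a) $\sum_{n \text{ in the block}} \psi(n)$ is large (at least $1$, say, so that summing over blocks diverges), but (b) the set $E_i(\text{block}) = \bigcup_{n \text{ in block}} \set{x \in [0,1) : \norm{nx + y_i} < \psi(n)}$ has measure less than $2^{-i}$. Point (b) is achieved exactly as in Duffin--Schaeffer: take a single large, highly divisible integer $q$ (or a short geometric-type progression of divisors of $q$), put all the denominators of the block among the divisors of $q$, and exploit that $\set{x : \norm{nx + y_i} < \psi(n)}$ with $n \mid q$ is a union of $n$ intervals whose \emph{union over $n \mid q$} is contained in a union of few intervals of total length controlled by $\sum_{n\mid q}\varphi(n)\psi(n)/n$-type quantities, which can be made tiny while $\sum \psi(n)$ stays bounded below. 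The translation by $y_i$ does not affect Lebesgue measure, so the measure estimate is insensitive to $y_i$; what matters is only the combinatorial arrangement of the denominators.

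Given such blocks, set $\psi$ to be their concatenation (with $\psi \equiv 0$ off the blocks). Then $\sum_n \psi(n) = \infty$ by (a). Fix $y = y_i$. The limsup set $\set{x : \norm{nx+y} < \psi(n) \text{ infinitely often}}$ is contained, for every $N$, in $\bigcup_{j \ge N} E_{j}^{(y_i)}$ where $E_j^{(y_i)}$ is the contribution of the $j$-th block computed with parameter $y_i$ rather than $y_j$. I must therefore ensure the measure bound of type (b) holds not just for the ``matching'' block but uniformly: each block, regardless of which $y$ is plugged in, contributes a set of small measure. This is automatic from the construction --- the bound in (b) came from the divisor structure, and the translate by an \emph{arbitrary} real only shifts intervals --- so $\sum_{j \ge N} \abs{E_j^{(y_i)}} \le \sum_{j\ge N} 2^{-j} \to 0$, giving measure zero by the (easy, convergence) Borel--Cantelli lemma.

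The main obstacle is arranging (a) and (b) to hold \emph{simultaneously within a single block} with a $y$-independent measure bound: one needs an infinite supply of integers $q$ with enough divisors that, for suitable weights $\psi(n)$ on (a sub-collection of) their divisors, the total weight exceeds $1$ while the union of the corresponding (translated) neighborhoods has measure $o(1)$ as a function of the block index. This is precisely the quantitative heart of the Duffin--Schaeffer Counterexample --- it rests on the fact that $\sum_{n \mid q}\varphi(n)/n$ can be much smaller than $\sum_{n\mid q} 1$ along primorial-type $q$, equivalently that $\varphi(n)/n$ can be small on a large set of divisors --- and I would isolate it as the one genuinely delicate lemma, with everything else (the block concatenation, the divergence bookkeeping, the Borel--Cantelli step, the irrelevance of the translation to measure) being routine.
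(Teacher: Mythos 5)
There is a genuine gap, and it sits exactly at the point you wave away as ``automatic.'' You claim that because translation by $y$ preserves Lebesgue measure, the Duffin--Schaeffer-type measure bound for a block ``came from the divisor structure'' and is therefore insensitive to which parameter is plugged in. That is false. In the homogeneous counterexample the smallness of the union over a block rests on the fact that for $n\mid q$ the lattice $\ZZ/n$ is \emph{contained} in $\ZZ/q$, so all the intervals pile up on the same $q$ centers. For an inhomogeneous parameter $y$, the set $(\ZZ+y)/n$ is a translate of a sublattice of $\frac1q\ZZ$ by $(q/n)y/q$, and these offsets vary with $n$: the points $(\ZZ+y)/n$ lie in $(\ZZ+(q/n)y)/q$, not near $(\ZZ+y)/q$, unless $\norm{(q/n)y - y}$ happens to be small. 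So the union over the divisors of $q$ of the translated neighborhoods can spread out and occupy a large measure rather than overlapping; indeed the paper's Theorem~\ref{thm:moreover} and Conjecture~\ref{lem:conjlem} exploit precisely this spreading to make the limsup set \emph{full} for parameters outside the rational span of $1$ and the $y_i$'s. Hence your uniform bound ``each block, regardless of which $y$ is plugged in, contributes a set of measure $\leq 2^{-j}$'' cannot be arranged by divisor structure alone, and without it the convergence Borel--Cantelli step collapses. Even for the ``matching'' block your argument is incomplete, because for irrational $y_i$ the translated progressions attached to arbitrary divisors of a highly composite $q$ need not overlap at all.

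The missing idea is that the support of $\psi$ must be chosen \emph{adaptively in terms of the $y_i$}: in the paper's construction the denominators are $k_m^{(j)}=K_j/(n_m^{(j)}+1)$ where the cofactors $n_m^{(j)}$ are simultaneous near-return times, $\max_{i\leq j}\norm{n_m^{(j)}y_i}\leq 2^{-j-1}$ (such times have positive density, Lemma~\ref{lem:multilem}, which also gives the divergence $\sum_m 1/(n_m^{(j)}+1)\geq 2^{j+1}$). This condition is exactly what forces $(\ZZ+y)/k_m^{(j)}$ to lie within $2^{-j-1}/K_j$ of $(\ZZ+y)/K_j$ (Claim~\ref{cl:containment}), so that the whole block is trapped in $E_{K_j}^y(2^{-j})$, whose measures are summable. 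Note also that the coprimality/Euler-$\varphi$ bookkeeping you invoke ($\sum_{n\mid q}\varphi(n)\psi(n)/n$) is a statement about reduced rational centers and plays no role here: in the inhomogeneous setting the centers are irrational, and what matters is the alignment of the shifted progressions, not which fractions are reduced. Your block/concatenation framework and the final Borel--Cantelli step are fine, but the quantitative heart of the proof is not the classical divisor lemma you isolate; it is the return-time alignment, and that is absent from your proposal.
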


In fact, the counterexamples we construct are automatically
counterexamples for $y=0$, as well as \emph{any} rational combination
of $1$ with finitely many elements of the sequence $\set{y_i}$, which
leads to the question of whether this can be avoided. (See
Remark~\ref{rem:moreover} and
Questions~\ref{q:ratcomb},~\ref{q:hominhom}, and~\ref{q:noyes}.) As
for inhomogeneous parameters that are \emph{not} in the rational span
of $1$ with $\set{y_i}$, we have the following continuation of
Theorem~\ref{thm:sequencecounterex}.

\begin{theorem}[To be read as a continuation of Theorem~\ref{thm:sequencecounterex}]\label{thm:moreover}
  Moreover, if Conjecture~\ref{lem:conjlem} is true, and $\set{z_i}$
  is a second sequence of real numbers no element of which lies in the
  rational span of $1$ with finitely many elements of $\set{y_i}$,
  then we may take $\psi$ to have the additional property that for any
  $z\in\set{z_i}$, \textbf{almost every} real number $x$ satisfies the
  inequality $\norm{nx+ z}< \psi(n)$ with infinitely many $n\geq 1$.
\end{theorem}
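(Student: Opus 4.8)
=== PROOF PROPOSAL ===

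\textbf{Strategy overview.} The plan is to build $\psi$ so that its support is concentrated on carefully chosen arithmetic progressions, in such a way that the homogeneous/rational-span directions see only a convergent mass while the ``generic'' directions $z\in\set{z_i}$ still see divergence. The first part (Theorem~\ref{thm:sequencecounterex}) should follow the Duffin--Schaeffer template: partition the integers into blocks, and on each block place $\psi$ on the multiples of a highly composite modulus $q_k$, with $\psi$ taking a small constant value $c_k/q_k$ there. One arranges $\sum_k c_k = \infty$ (divergence of $\sum_n\psi(n)$) but controls, for each fixed $y=y_i$, the measure of the set of $x$ with $\norm{nx+y_i}<\psi(n)$ for infinitely many $n$ in the support; a Borel--Cantelli argument shows this set is null provided the ``local'' measures at scale $q_k$ decay fast enough once $y_i$ is approximated by a fraction with denominator dividing $q_k$. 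Handling all $y_i$ simultaneously is done by a diagonal choice: at block $k$ one only needs to defeat $y_1,\dots,y_k$, and since the sequence of moduli can be taken to be a rapidly growing chain $q_1\mid q_2\mid\cdots$, each $y_i$ is defeated from block $i$ onward. The fact that $y=0$ and rational combinations are automatically defeated is because the relevant inequality for such $y$ reduces to one of the same form with a shifted rational parameter, which is absorbed by the same denominators.

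\textbf{The divergence half for $z\in\set{z_i}$.} This is where Conjecture~\ref{lem:conjlem} enters. The point is that, having fixed the support of $\psi$ to be a union of arithmetic progressions $\{n\equiv 0 \pmod{q_k}\}$ within block $k$, one wants to know that for $z$ \emph{not} in the rational span of $1$ with the $y_i$'s, the sets $\{x : \norm{nx+z}<\psi(n), \ q_k\mid n,\ n\in B_k\}$ have measures summing to infinity and are quasi-independent, so that the divergence Borel--Cantelli lemma gives full measure. The obstruction to pure quasi-independence is exactly the overlap structure of the residue classes modulo the various $q_k$ --- this is a covering-systems phenomenon, and the ``dynamical version of Erd\H{o}s' Covering Systems Conjecture'' is presumably the statement that one cannot cover (a positive-measure set of) the relevant orbit by congruences with distinct moduli drawn from our chain unless the moduli are small. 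First I would isolate from Conjecture~\ref{lem:conjlem} the precise combinatorial/measure-theoretic consequence needed: that the ``bad'' set where infinitely many of these progressions fail to contribute new measure has measure zero, for every $z$ outside the rational span. Then the construction of $\psi$ is adjusted so that its restriction to each block is supported precisely on such a non-covering family.

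\textbf{Assembling the two halves.} With the two ingredients in hand, the construction is: enumerate $\set{y_i}$ and $\set{z_i}$; build blocks $B_1<B_2<\cdots$ of consecutive integers; on $B_k$ choose a modulus $q_k$ (a multiple of $q_{k-1}$, growing fast) and rational shifts so that (i) the total $\psi$-mass on $B_k$ is some $c_k$ with $\sum c_k=\infty$; (ii) for each $i\le k$, the measure of $x$ with $\norm{nx+y_i}<\psi(n)$ for some $n\in B_k$ is $\le 2^{-k}$ --- achievable because near a good rational approximation of $y_i$ the supported $n$'s line up and the local measure is essentially $c_k\cdot(\text{number of distinct fractions})/q_k$, which we force to be tiny by taking $q_k$ enormous relative to the denominators of approximations of $y_1,\dots,y_k$; and (iii) the progressions in $B_k$ form a non-covering family in the sense extracted from Conjecture~\ref{lem:conjlem}. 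Part (ii) plus Borel--Cantelli (convergence) kills each $y_i$; part (i) plus (iii) plus Borel--Cantelli (divergence, using the conjecture for quasi-independence) gives full measure for each $z_j$.

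\textbf{Main obstacle.} The hard part will be reconciling (ii) and (iii): making $q_k$ so large that \emph{every} $y_i$ ($i\le k$) is crushed while simultaneously keeping enough residue classes in play, with enough mass $c_k$, that the family is non-covering for the $z_j$'s \emph{and} so that Conjecture~\ref{lem:conjlem} actually applies to it (the conjecture presumably has hypotheses --- e.g. on the sizes or multiplicities of the moduli --- that must be respected by our choice). In other words, the delicate point is the \emph{bookkeeping of scales}: the shifts approximating the $y_i$ must be among the residues we \emph{exclude} from the support, while the residues we \emph{keep} must be numerous and ``spread out'' enough (relative to $1,y_1,\dots,y_k$) that the dynamical covering conjecture certifies that no $z_j$-orbit is covered. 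I expect this to require a careful inductive choice of $q_k$ as a product over the previously-seen denominators times a large free parameter, together with a lemma (itself a consequence of the conjecture) translating ``$z_j$ not in the rational span of $1,y_1,\dots,y_k$'' into ``the kept residues hit the $z_j$-orbit with the expected frequency.''
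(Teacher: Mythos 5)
Your proposal misreads the role of Conjecture~\ref{lem:conjlem}, and this is not a cosmetic point: it changes the entire mechanism of the proof. You guess that the conjecture is a \emph{non-covering} obstruction (``one cannot cover the relevant orbit by congruences with distinct moduli unless the moduli are small'') from which one would extract quasi-independence of block events and then apply the divergence Borel--Cantelli lemma. The conjecture actually asserts the opposite kind of statement: it is an \emph{existence of near-covering} result. It says that for $z$ outside the rational span of $1, y_1,\dots,y_\ell$ one can find integers $n_0<n_1<\dots<n_M$ and shifts $w_1,\dots,w_\ell$ with $\max_i\norm{n_m y_i - w_i}\le\eps/2$ for all $m$, such that the union $\bigcup_m \frac{\ZZ+z}{k_m}+\parens*{-\frac{\eps}{2K},\frac{\eps}{2K}}$ already has measure at least $1-\delta$ in $[0,1)$, where $K=\prod(n_m+1)$ and $k_m=K/(n_m+1)$. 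The paper's proof then uses this \emph{directly, block by block}: enumerating $\set{z_i}$ so that each $z$ recurs infinitely often and taking $\eps=2^{-j}$, $\delta_j\to0$ at stage $j$, the support of $\psi$ in block $j$ consists exactly of the divisors $k_m^{(j)}$ of $K_j$, and for the relevant $z$ the union $\bigcup_m E^z_{k_m^{(j)}}(\psi)$ has measure $\ge 1-\delta_j$ at that single stage. Since this happens along infinitely many $j$ with $\delta_j\to0$, $\limsup_n E^z_n(\psi)$ has full measure with no quasi-independence or second Borel--Cantelli argument at all; meanwhile the condition $\norm{n_m^{(j)}y_i-w_i^{(j)}}\le 2^{-j-1}$ makes the variant of Claim~\ref{cl:containment} collapse the $y_i$-sets into $E^{y_i+w_i^{(j)}}_{K_j}(2^{-j})$, whose measures are summable, so the $y_i$ remain defeated. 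Your plan to derive quasi-independence from a non-covering statement is not something the actual conjecture supplies, and since the theorem is conditional on that specific conjecture, a proof must run through its actual content; as written, your divergence half has no valid engine.

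A secondary but related problem is the shape of the support. You place $\psi$ on \emph{multiples} of a highly composite modulus $q_k$, with residue classes to be kept or excluded; the construction that makes the convergence half work (both in the Duffin--Schaeffer counterexample and here) supports $\psi$ on \emph{divisors} $k_m^{(j)}=K_j/(n_m^{(j)}+1)$ of the highly composite $K_j$, because only then do the sets $\frac{\ZZ+y}{k_m^{(j)}}$ collapse (after the inhomogeneous shift, using $\norm{n_m^{(j)}y}$ or $\norm{n_m^{(j)}y-w_i}$ small) into a thin neighborhood of the single grid $\frac{\ZZ+y}{K_j}$, giving total measure $O(2^{-j})$ per block. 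With support on multiples of $q_k$ there is no such collapse, so your item (ii) --- crushing every $y_i$ with $i\le k$ by Borel--Cantelli --- does not go through as described, and your ``exclude the residues approximating the $y_i$'' device has no counterpart in a divisor-supported construction. The genuine tension you identify at the end (simultaneously defeating $y_1,\dots,y_j$ and covering for $z$) is real, but it is resolved precisely by the two clauses of Conjecture~\ref{lem:conjlem} holding for one and the same tuple $(n_1,\dots,n_M)$, with the auxiliary shifts $w_i$ providing the needed slack --- not by bookkeeping of excluded residue classes and an independence argument.
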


\begin{remark*}
  Conjecture~\ref{lem:conjlem} is a dynamical version of Erd\H{o}s'
  famous ``covering systems'' conjecture~\cite{Erdoscongruences,
    Erdos2} and recent progress on it~\cite{FFKPY, Hough}. We have
  left a full discussion of this for~\S\ref{sec:proof-theor-refthm:m}
  so as not to disrupt the present narration. Suffice it to say
  that~\cite{FFKPY, Hough} provide overwhelming evidence in favor of
  Conjecture~\ref{lem:conjlem}.

  Notice that by the Borel--Cantelli Lemma, the conclusion of
  Theorem~\ref{thm:moreover} forces $\sum_n \psi(n)$ to diverge, and
  so when Theorems~\ref{thm:sequencecounterex} and~\ref{thm:moreover}
  are read together the divergence condition is redundant. We list and
  prove the two theorems separately, both for the readers'
  convenience, and because Theorem~\ref{thm:sequencecounterex} has a
  simpler proof that does not require Conjecture~\ref{lem:conjlem}.
\end{remark*}

It is natural to ask whether points in the rational span of
$\set{1,y_i}$ are the \emph{only} ones for which the counterexamples
in Theorem~\ref{thm:sequencecounterex}/\ref{thm:moreover} work. Rather
than answer this directly, we construct counterexamples that work for
an \emph{uncountable} set of possible inhomogeneous parameters (as
well as their rational combinations with $1$).

\begin{theorem}\label{thm:cantorex}
  There is an uncountable ``Cantor-type'' set $C$ of real numbers and
  an approximating function $\psi$ such that the sum $\sum_n\psi(n)$
  diverges and such that for any $y\in C$ there are almost no real
  numbers $x$ for which the inequality $\norm{nx+y}< \psi(n)$ is
  satisfied by infinitely many integers $n\geq 1$.
\end{theorem}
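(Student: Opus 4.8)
The plan is to graft a Cantor-set refinement onto the block-by-block construction behind Theorem~\ref{thm:sequencecounterex}, exploiting the fact that the obstruction to approximability produced by a single block is stable under small perturbations of the inhomogeneous parameter. The key input is a \emph{block lemma}: for every $\eps>0$, every $L_0\in\ZZ$, and every finite set $F\subseteq\RR$, there is a finite set $B\subseteq\ZZ\cap[L_0,\infty)$ and values $\psi_B(n)>0$ for $n\in B$ with $\sum_{n\in B}\psi_B(n)\geq 1$, such that for every $y\in F$,
\[
  \lambda\set*{x\in[0,1):\ \norm{nx+y}<\psi_B(n)\text{ for some }n\in B}<\eps,
\]
$\lambda$ denoting Lebesgue measure. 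I would obtain this either directly, by the inhomogeneous Duffin--Schaeffer construction, or (more economically) by extracting the single-block step from the proof of Theorem~\ref{thm:sequencecounterex}, which is assembled diagonally from exactly such blocks over the finite initial segments $\set{y_1,\dots,y_k}$ of the given sequence.

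Next I would record a continuity observation. Once $B$ and $\psi_B$ are fixed, the function $g_B(y):=\lambda\set{x\in[0,1):\norm{nx+y}<\psi_B(n)\text{ for some }n\in B}$ is the Lebesgue measure of a finite union of intervals whose endpoints $\parens{a-y\pm\psi_B(n)}/n$ move affinely with $y$; hence $g_B$ is continuous on $\RR$. In particular, if the block lemma gives $g_B(y_0)<\eps$, then $g_B<\eps$ on an open neighbourhood of $y_0$. This is the only place where the stability under perturbing $y$ is used, and it is why it suffices to feed the block lemma a finite set of parameters rather than a positive-length set: a single block cannot make $g_B$ small on a whole interval of $y$'s, but it need only be small at a point, after which continuity takes over.

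With these in hand I would run the following recursion, producing nested compact sets $C_0\supseteq C_1\supseteq\cdots$, with $C_k$ a disjoint union of $2^k$ closed intervals each of length at most $2^{-k}$, together with pairwise-disjoint finite blocks $B_1,B_2,\dots$ with $\max B_k<\min B_{k+1}$. Start with $C_0=[0,1]$. At stage $k\geq1$: pick one interior point from each of the $2^{k-1}$ intervals of $C_{k-1}$ to form a finite set $F_k$; apply the block lemma with $F=F_k$, $\eps=2^{-k-1}$, and $L_0=\max B_{k-1}+1$, obtaining $B_k$ and $\psi$ on $B_k$ with $\sum_{n\in B_k}\psi(n)\geq1$; by the continuity observation each point of $F_k$ has an open neighbourhood on which $g_{B_k}<2^{-k}$; inside these neighbourhoods, and inside the corresponding intervals of $C_{k-1}$, choose two disjoint closed subintervals of length $\leq 2^{-k}$, and let $C_k$ be the union of the resulting $2^k$ intervals. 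Finally set $\psi(n)=0$ for $n\notin\bigcup_k B_k$, and $C=\bigcap_k C_k$.

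Then $C$ is a nonempty compact perfect totally disconnected set---an uncountable ``Cantor-type'' set---and $\sum_n\psi(n)=\sum_k\sum_{n\in B_k}\psi(n)=\infty$. For any $y\in C$ one has $y\in C_k$ for all $k$, hence $g_{B_k}(y)<2^{-k}$ for all $k$, so the sets $E_k(y):=\set{x\in[0,1):\norm{nx+y}<\psi(n)\text{ for some }n\in B_k}$ satisfy $\sum_k\lambda(E_k(y))<\infty$; by the Borel--Cantelli lemma, almost every $x$ lies in only finitely many $E_k(y)$, and since each $B_k$ is finite this means $\norm{nx+y}<\psi(n)$ holds for only finitely many $n$. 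The whole difficulty is concentrated in the block lemma---the inhomogeneous Duffin--Schaeffer construction for finitely many parameters at once, which is already the substance of Theorem~\ref{thm:sequencecounterex}; the Cantor wrapper and the continuity argument are routine once that is in place.
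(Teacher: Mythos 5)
Your proposal is correct, and its skeleton --- a block-by-block $\psi$ with pairwise disjoint supports, each block contributing at least $1$ to $\sum_n\psi(n)$ while the union of its approximation sets has measure $O(2^{-k})$ at the relevant inhomogeneous parameters, followed by Borel--Cantelli --- is exactly the paper's. The one genuine difference is how smallness at finitely many sampled parameters is propagated to an uncountable set of $y$'s. The paper defines the next Cantor level directly as a sub-level set, $C_j=\set{y\in C_{j-1}: \norm{n_m^{(j)}y}\leq 2^{-j-1} \text{ for all } m}$, so that Claim~\ref{cl:containment} applies verbatim to every $y\in C_j$; to guarantee that this sub-level set branches inside each interval of $C_{j-1}$ it needs the auxiliary condition $n_m^{(j)}\geq L_j$ with $[0,1)\subset L_jI_i^{(j-1)}$. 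You instead observe that $y\mapsto g_{B_k}(y)$ is continuous (indeed Lipschitz, since the finitely many interval endpoints move affinely in $y$), obtain open neighbourhoods of the sampled points on which the block measure stays small, and place the two next-level intervals inside these neighbourhoods; this is softer and dispenses with the $L_j$ scaling condition, at the cost of a Cantor set no longer described by explicit inequalities. Your ``block lemma'' is precisely the single-block step of Theorem~\ref{thm:sequencecounterex} (Lemma~\ref{lem:multilem} to find the times $n_m$ beyond any prescribed bound, then the construction of $K$, $k_m=K/(n_m+1)$, the definition~\eqref{eq:def} on that block, and Claims~\ref{cl:containment}--\ref{cl:div}), so nothing new is needed there. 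Two trivial points you should make explicit: the subintervals chosen at each stage must have positive length (so that they contain interior points for the next stage and $C$ is perfect), and the final measure-zero conclusion extends from $[0,1)$ to all of $\RR$ by $1$-periodicity, as in the proof of Claim~\ref{cl:limsup}.
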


\begin{remark*}
  From the construction in the proof, it will be clear that there are
  uncountably many such Cantor sets, each with its own approximating
  function.
\end{remark*}

Theorems~\ref{thm:sequencecounterex} and~\ref{thm:cantorex} imply that
a non-monotonic inhomogeneous version of Khintchine's Theorem has
absolutely no hope of being true, even if we relax the requirement
that the inhomogeneous part be fixed and instead let it come from some
set of \emph{permitted} real numbers. Still, if we take the permitted
set to be an equidistributed sequence, then we can prove the
following.

\begin{theorem}\label{thm:arbitrary}
  Let $\set{y_m}$ be an equidistributed sequence $\bmod\; 1$. Suppose
  $\psi$ is an approximating function such that $\sum_n\psi(n)$
  diverges. Then for every $R,\eps>0$ there is a density-one set of
  integers $m\geq1$ with the property that the set of real $x$ for
  which the inequality $\norm*{nx +y_m}<\psi(n)$ has at least $R$
  integer solutions $n$, has measure at least $1-\eps$.
\end{theorem}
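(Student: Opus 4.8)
The plan is to pass from a statement about a single (well-chosen) $x$ to a statement about most $x$, by exploiting equidistribution of $\set{y_m}$ together with a measure-theoretic averaging argument. First I would fix $R$ and $\eps$ and, for each $N$, consider the finite truncation: the set of pairs $(x,y)\in[0,1)^2$ such that $\norm{nx+y}<\psi(n)$ has at least $R$ solutions $n$ with $1\le n\le N$. As $N\to\infty$, since $\sum_n\psi(n)$ diverges, the Doubly Metric Inhomogeneous Khintchine Theorem (divergence part) tells us that almost every pair $(x,y)$ lies in infinitely many of the sets $\set{\norm{nx+y}<\psi(n)}$; hence for any $R$ the ``at least $R$ solutions in $[1,N]$'' event has $(x,y)$-measure tending to $1$ as $N\to\infty$. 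So choose $N=N(R,\eps)$ large enough that this planar set $E_N$ has Lebesgue measure at least $1-\eps^2$ (say).

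Next I would use the fact that $E_N$ is a \emph{finite} union of boxes (or at worst a finite Boolean combination of the open strips $\set{(x,y):\norm{nx+y}<\psi(n)}$, $n\le N$), so in particular for each fixed $y$ the slice $E_N^y=\set{x:(x,y)\in E_N}$ is a finite union of intervals, and the map $y\mapsto \abs{E_N^y}$ is Riemann integrable (piecewise continuous, with only finitely many breakpoints coming from the finitely many values of $n$). By Fubini, $\int_0^1 \abs{E_N^y}\,dy = \abs{E_N}\ge 1-\eps^2$. Now apply equidistribution: since $\set{y_m}$ is equidistributed $\bmod 1$ and $y\mapsto\abs{E_N^y}$ is Riemann integrable, the averages $\frac1M\sum_{m=1}^M \abs{E_N^{y_m}}$ converge to $\int_0^1\abs{E_N^y}\,dy\ge 1-\eps^2$. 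A Markov-type / density argument then shows that the set of indices $m$ for which $\abs{E_N^{y_m}}<1-\eps$ has upper density at most $\eps$; letting me take, say, $\eps$ small shows the complementary set of $m$ — those with $\abs{E_N^{y_m}}\ge 1-\eps$ — has density one. For each such $m$, the set of $x$ with at least $R$ solutions $n\le N$ (hence at least $R$ solutions overall) has measure at least $1-\eps$, which is exactly the claim.

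One technical point to handle carefully: equidistribution of a sequence implies convergence of ergodic averages only against Riemann-integrable test functions (equivalently, functions continuous off a null set), so I must argue that $y\mapsto\abs{E_N^y}$ really has this regularity. This is where finiteness of $N$ is essential — the slice $E_N^y$ changes only through finitely many linear inequalities $\norm{nx+y}<\psi(n)$, so its length is a piecewise-linear (hence Riemann-integrable) function of $y$, with breakpoints at a finite set of rational shifts of the $\psi(n)$; I would spell this out with a short explicit description of $E_N^y$ as a union of intervals whose endpoints move continuously (in fact affinely) with $y$ between breakpoints. A second point: I should state the density estimate cleanly, e.g.\ via the inequality $\frac1M\#\set{m\le M:\abs{E_N^{y_m}}<1-\eps}\cdot \eps \le \frac1M\sum_{m\le M}(1-\abs{E_N^{y_m}})$ and the fact that the right side tends to $1-\abs{E_N}\le \eps^2$, so the left side is eventually $\le 2\eps^2$, giving density $\le 2\eps$ and hence (after renaming $\eps$) the desired density-one conclusion.

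The main obstacle I anticipate is not conceptual but bookkeeping: making the passage from ``$\abs{E_N}\ge 1-\eps^2$'' to ``density-one set of good $m$, each with slice measure $\ge 1-\eps$'' completely rigorous requires juggling two small parameters and the (merely asymptotic) nature of both the Doubly Metric divergence conclusion and of equidistribution. Everything else — the reduction to a finite-$N$ event, Fubini, and the Riemann-integrability of the slice-length function — is routine once the finite truncation is in place. I would organize the proof so that $N$ is chosen first (depending on $R$ and on a small auxiliary $\delta$ comparable to $\eps^2$), then the equidistribution limit is invoked for the fixed function $\abs{E_N^{\,\cdot}}$, and only at the end is $\delta$ related back to the given $\eps$.
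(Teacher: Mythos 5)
Your argument is correct, but it takes a genuinely different route from the paper. The paper does not invoke the Doubly Metric Inhomogeneous Khintchine Theorem as a black box: it proves Theorem~\ref{thm:doublymetric}, a doubly metric statement for the averaged measure $\mu$ taken along the sequence $\set{y_m}$ itself, by a Paley--Zygmund second-moment argument (Lemma~\ref{lem:pz}), where equidistribution enters only through the computation of the cross-correlations $\frac1M\sum_{m\le M}\int_0^1\bone_n(nx+y_m)\bone_k(kx+y_m)\,dx \to 4\psi(n)\psi(k)$ (Lemma~\ref{lem:indep}); the density-one statement then follows from the elementary averaging Lemma~\ref{lem:simple}. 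You instead cite the classical doubly metric theorem for the measure on pairs $(x,y)$, truncate at a finite level $N$ so that the planar event $E_N$ has measure $\ge 1-\delta$, and transfer this to the sequence via Fubini plus equidistribution applied to the slice-measure function $y\mapsto\abs{E_N^y}$, which is indeed Riemann integrable --- in fact Lipschitz, since $\abs{E_N^{y+h}\triangle E_N^y}\le\sum_{n\le N}\abs{A_n^{y+h}\triangle A_n^y}\le 2N\abs{h}$, which is a cleaner justification than piecewise linearity --- and finish with a Markov-type count. What each approach buys: yours is shorter and more elementary \emph{given} the doubly metric theorem, but the proof of that theorem (in Cassels) is essentially the same second-moment argument, so the paper's route is self-contained to the same degree and in addition produces Theorem~\ref{thm:doublymetric}, which is stated as a result of independent interest. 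One step of yours needs to be phrased more carefully: with $\delta$ merely ``comparable to $\eps^2$'' the Markov bound gives the good set lower density $1-O(\eps)$, not density one. The correct decoupling is to fix $\eps$ and note that the set $G(\eps)$ of good $m$ for the \emph{untruncated} problem contains, for every $\delta>0$, the set $\set{m:\abs{E_{N(\delta)}^{y_m}}\ge 1-\eps}$, whose lower density is $\ge 1-\delta/\eps$; letting $\delta\to0$ with $\eps$ fixed gives density one. This is exactly the role played by Lemma~\ref{lem:simple} in the paper, and it is a trivial repair of your write-up, not a gap in the idea.
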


Theorem~\ref{thm:sequencecounterex} prevents us from doing too much
better than this. On the other hand, the approximating functions
$\psi$ in Theorems~\ref{thm:sequencecounterex} and~\ref{thm:cantorex}
have the property that $\sum_n\varphi(n)\psi(n)/n$ converges. So it
becomes natural to ask about inhomogeneous versions of the
Duffin--Schaeffer Conjecture, where we seek approximations by
\emph{reduced} fractions, and use an accordingly modified divergence
condition. There is, of course, the direct inhomogeneous translation:

\theoremstyle{plain} \newtheorem*{idsc}{Inhomogeneous
  Duffin--Schaeffer Conjecture}
\begin{idsc}
  Let $y$ be a real number. If $\psi$ is an approximating function
  such that $\sum_n\varphi(n)\psi(n)/n$ diverges, then for almost
  every real number $x$ there are infinitely many coprime integer
  pairs $(a,n)$ such that $\abs{nx-a+y}<\psi(n)$.
\end{idsc}

But this is much stronger than the original Duffin--Schaeffer
Conjecture, and probably therefore harder. It makes sense to explore
related questions. For example, in the spirit of
Questions~\ref{q:ratcomb} and~\ref{q:hominhom}, are there any
dependencies between the Inhomogeneous Duffin--Schaeffer Conjecture
for one inhomogeneous parameter versus another? And, in the spirit of
Theorems~\ref{thm:sequencecounterex}--\ref{thm:arbitrary}, can we make
progress on an inhomogeneous version of the Duffin--Schaeffer
Conjecture where we do not fix the inhomogeneous parameter, but
instead let it come from some predetermined sequence? We discuss these
questions in~\S\ref{sec:inhom-duff-scha}, and
in~\S\ref{sec:inhom-zero-one} we make some progress in the form of the
following zero-one laws, inspired by Gallagher's and deduced by
modifying his proof.

\begin{theorem}\label{thm:E01}
  Let $y$ be a real number and $\psi$ an approximating function.
  \begin{itemize}
  \item For almost every or almost no real number $x$ there exists an
    integer $m\geq 1$ such that $\abs{nx - a + my}<\psi(n)$ has
    infinitely many coprime integer solutions $(a,n)$.
  \item For almost every or almost no real number $x$ there exist
    \emph{infinitely many} such integers $m\geq 1$.
  \end{itemize}
\end{theorem}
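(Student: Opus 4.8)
The plan is to adapt Gallagher's original ergodic-theoretic argument for his zero-one law to the present inhomogeneous setting. Recall the structure of Gallagher's proof: one writes the limsup set as a $\limsup$ of unions of intervals around reduced fractions $a/n$, observes that this set is invariant (up to measure zero) under the transformations $x\mapsto x+p/q$ for all rationals $p/q$, and then applies a zero-one law for the action of $\mathbb{Q}/\mathbb{Z}$ on $\mathbb{R}/\mathbb{Z}$ by translations — any measurable set invariant under this dense group of rotations has measure zero or one, since the only invariant functions in $L^2(\mathbb{R}/\mathbb{Z})$ are the constants (the nontrivial characters $e^{2\pi i k x}$ are moved by some rotation $p/q$). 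The key point is that the relevant set here is still essentially rotation-invariant: adding $p/q$ to $x$ permutes the reduced-fraction neighborhoods defining the set, and the shift $my$ is unaffected since we take a union over (or infinitely many of) the integers $m$.

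First I would fix notation: for the first bullet, let $E = E(\psi,y)$ be the set of $x$ for which there exists $m\geq 1$ with $\abs{nx-a+my}<\psi(n)$ for infinitely many coprime pairs $(a,n)$; for the second bullet let $E_\infty = E_\infty(\psi,y)$ be the set of $x$ for which infinitely many such $m$ work. Both are clearly measurable (countable unions/intersections of limsup sets of open sets). Next I would show each is invariant mod null sets under $x\mapsto x+p/q$ for every $p/q\in\mathbb{Q}$. The reason: if $\abs{nx-a+my}<\psi(n)$ and we replace $x$ by $x+p/q$, then $n(x+p/q)-a'+my < \psi(n)$ where $a' = a - np/q$; this $a'$ need not be an integer, nor coprime to $n$, so one argues exactly as Gallagher does — one restricts to those $n$ divisible by $q$ (a positive-density subsequence causes no loss because the limsup is over infinitely many $n$), writes $n=qn''$, and checks that coprimality of $(a,n)$ transfers to coprimality of the new numerator with $n$ after absorbing $p/q$, up to passing to a further subsequence and adjusting; the technical bookkeeping is identical to~\cite{Gallagher01}. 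Crucially the term $my$ is untouched by this translation, and since $m$ is either existentially quantified or required to occur infinitely often, the set of admissible $m$ is unchanged, so $E$ and $E_\infty$ are genuinely invariant.

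Given invariance mod nullsets under all rational rotations, I would invoke the zero-one law for the $\mathbb{Q}/\mathbb{Z}$-action: the indicator $\mathbf{1}_E$, viewed in $L^2(\mathbb{R}/\mathbb{Z})$, has Fourier coefficients supported at frequencies $k$ with $e^{2\pi i k p/q}=1$ for all $p/q$, forcing $k=0$, hence $\mathbf{1}_E$ is constant a.e., so $E$ has measure $0$ or $1$; likewise for $E_\infty$. This completes both bullets.

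The main obstacle — the step that requires real care rather than citation — is verifying that the coprimality condition on $(a,n)$ is preserved (after passing to an appropriate subsequence of $n$) under the rational translations. This is exactly the subtlety that makes Gallagher's theorem nontrivial, and it is where the proof of Theorem~\ref{thm:eitheror} will also have to be checked; but since the inhomogeneous shift $my$ plays a completely passive role under the translations $x\mapsto x+p/q$, no new difficulty arises beyond what is already handled in~\cite{Gallagher01}, and the argument goes through with only cosmetic changes. I would present the details for the first bullet and indicate that the second is identical with ``there exists $m$'' replaced by ``there exist infinitely many $m$'' throughout.
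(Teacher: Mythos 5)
There is a genuine gap, and it sits at the core step of your argument. You assert that Gallagher's proof works by showing the limsup set is invariant (mod null) under every rational rotation $x\mapsto x+p/q$ and then invoking a zero-one law for the $\mathbb{Q}/\mathbb{Z}$-action; but this is not Gallagher's argument, and the invariance itself is precisely what fails. If $\abs{nx-a+my}<\psi(n)$ with $(a,n)=1$, then after translating $x$ by $p/q$ the natural new numerator $a+np/q$ is an integer only when $q\mid np$, and even then coprimality with $n$ can be destroyed. Your proposed repair---``restrict to those $n$ divisible by $q$, a positive-density subsequence causes no loss because the limsup is over infinitely many $n$''---does not work: the set of $n$ solving the inequality for a given $x$ is just some infinite set, and nothing forces it to contain infinitely many multiples of $q$ (it may consist entirely of $n$ coprime to $q$), so the limsup taken along $q\mid n$ can be a strictly smaller, even null, set while the original set is full. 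This is exactly the obstruction that makes the coprime zero-one law nontrivial, and it cannot be waved away by citing \cite{Gallagher01}, because Gallagher never establishes rational-translation invariance of the full set.

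The paper's proof (Theorem~\ref{thm:EbarE}) instead follows Gallagher's actual scheme: after reducing to $\psi(n)=o(n)$, one fixes a prime $p$ and splits each $\E^{my}$ into the three sets $A(p,m)$, $B(p,m)$, $C(p,m)$ according as $p\nmid n$, $p\parallel n$, or $p^2\mid n$. For the first two pieces one uses the \emph{multiplicative} maps $x\mapsto px$ and $x\mapsto px+\tfrac1p$, which are ergodic; these dilate the inequality to $p^{\nu}\psi(n)$, which is why the auxiliary sets $A(p^\nu,m)$ with $p^{\nu-1}\psi$ are introduced and why Cassels' lemma (Lemma~\ref{lem:cassels}) and Borel--Cantelli are needed to show the dilation does not change measures. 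Note also that multiplication by $p$ sends the parameter $my$ to $pmy$, i.e.\ it permutes the family $\set{\E^{my}}$ rather than fixing each member---this is exactly why the theorem is stated for the union (resp.\ limsup) over $m$, a point your ``the shift $my$ plays a completely passive role'' remark elides. Finally, if all $A(p)$ and $B(p)$ are null, one uses that $C(p)$ \emph{is} invariant under translation by $1/p$ (here coprimality survives because $p^2\mid n$ makes $(a\pm n/p,n)=1$), giving $\abs{\E\cap I}=\abs{\E}\cdot\abs{I}$ for intervals of length $1/p$, and concludes with the Lebesgue density theorem. Translation invariance is thus only available on the $p^2\mid n$ piece, not for the whole set, so your route as written cannot be completed without essentially reconstructing this decomposition.
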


\begin{remark*}
  Notice that Gallagher's Zero-One Law~\cite{Gallagher01} is the same
  statement when $y=0$. (Of course, in that case, the integer $m$ has
  no role to play.) Our proof follows his.
\end{remark*}

\begin{theorem}\label{thm:eitheror}
  Let $y$ be a real number and $\psi$ an approximating function.
  \begin{itemize}
  \item \textbf{Either} for every integer $m\geq 1$, there are almost
    no real $x$ for which
    \begin{equation}
      \abs{nx - a + my}<\psi(n) \textrm{ for infinitely many }(a,n)=1,\label{eq:coprime}
    \end{equation}
  \item \textbf{Or} at least one of the following
    holds:\begin{itemize}
    \item There is some $m\geq 1$ such that~\eqref{eq:coprime} holds
      for almost every real $x$.
    \item For any $\eps>0$ there are arbitrarily many $m\geq 1$ such
      that the set of $x$ for which~\eqref{eq:coprime} holds has
      measure greater than $1-\eps$.
    \end{itemize}
  \end{itemize}
\end{theorem}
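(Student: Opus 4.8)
I would pass to the circle $\mathbb{T}=\RR/\ZZ$, on which every set below is $\ZZ$-invariant. For $m\ge1$ write $A_m\subseteq\mathbb{T}$ for the set of $x$ such that $\abs{nx-a+my}<\psi(n)$ holds for infinitely many coprime pairs $(a,n)$; then $A_m=\limsup_n E^{(m)}_n$ with $E^{(m)}_n=\bigcup_{(a,n)=1}B\bigl(\tfrac{a-my}{n},\tfrac{\psi(n)}{n}\bigr)$, so each $A_m$ is measurable. Reading ``arbitrarily many'' as ``infinitely many'', the theorem is equivalent to the zero-one statement $\sup_{m\ge1}\abs{A_m}\in\{0,1\}$: if the supremum is $0$ we are in the first alternative; if it is $1$ then either it is attained (first bullet of the second alternative) or, if not, for every $\eps>0$ infinitely many $m$ have $\abs{A_m}>1-\eps$ --- otherwise only finitely many $m$ would have $\abs{A_m}>1-\eps$, forcing the supremum to be $<1$ --- which is the second bullet. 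So it suffices to prove: if some $\abs{A_{m_1}}>0$, then $\abs{A_{qm_1}}\to1$ as $q\to\infty$.

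The engine is a Gallagher-type relation between family members and their rational translates. Following Gallagher's argument as adapted in the proof of Theorem~\ref{thm:E01}, one shows that for every $m\ge1$ and every rational $p/q$ in lowest terms,
\[
A_m+\tfrac pq\ \subseteq\ A_{qm}\qquad\text{up to a null set}
\]
(heuristically, clearing denominators in $\norm{n(x+p/q)+my}$ replaces the shift $my$ by $qmy$, so the index goes up by the factor $q$; this is why no single $A_m$ is translation-invariant even though $\bigcup_mA_m$ is). Granting this, fix $m_1$ with $\delta:=\abs{A_{m_1}}>0$. For each $q$,
\[
\abs{A_{qm_1}}\ \ge\ \Bigl\lvert\bigcup_{(p,q)=1}\bigl(A_{m_1}+\tfrac pq\bigr)\Bigr\rvert\ =:\ \abs{B_q},
\]
and a routine second-moment computation gives $\abs{B_q}\to1$. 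Indeed, put $N_q(x)=\#\{p\in(\ZZ/q\ZZ)^\times:x-\tfrac pq\in A_{m_1}\}$; then $\int_{\mathbb{T}}N_q=\varphi(q)\delta$, while $\int_{\mathbb{T}}N_q^2=\sum_{p,p'}\abs{A_{m_1}\cap(A_{m_1}+\tfrac{p-p'}{q})}$, and since $t\mapsto\abs{A_{m_1}\cap(A_{m_1}+t)}$ is continuous (it is the autocorrelation of an $L^2$ function) and the $\varphi(q)$ residues coprime to $q$ become equidistributed mod $q$ as $q\to\infty$, this sum is $(1+o(1))\,\varphi(q)^2\delta^2=(1+o(1))\bigl(\int N_q\bigr)^2$. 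Chebyshev's inequality then gives $\abs{B_q}\ge\abs{\{N_q\ge\tfrac12\varphi(q)\delta\}}\to1$, hence $\abs{A_{qm_1}}\to1$ as required.

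The one substantial step is the displayed containment $A_m+p/q\subseteq A_{qm}$, which is exactly where Gallagher's proof has to be reworked, and which I expect to be the main obstacle. Two difficulties appear. First, clearing denominators rescales the approximating function: at level $N=qn$ the natural slack is $q\psi(n)=q\psi(N/q)$ rather than $\psi(N)$, so one cannot substitute directly but must reconcile scales along the $\limsup$, discarding the ranges of $n$ on which $\psi$ behaves unfavourably. Second, and more seriously, the coprimality condition $(a,n)=1$ is not preserved under $x\mapsto x+p/q$: the numerator becomes $np+aq$, and its $\gcd$ with $qn$ is controlled only when $\gcd(n,q)=1$. I would dispose of both by splitting the $\limsup$ over $n$ according to $\gcd(n,q)$ and the dyadic scale of $\psi(n)/n$, showing that the ``bad'' part contributes a null set to the $\limsup$ and performing the substitution on the ``good'' part --- this being the same mechanism that already yields the rational-translation invariance of $\bigcup_mA_m$ behind Theorem~\ref{thm:E01}. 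The remaining ingredients --- the reformulation and the second-moment estimate --- are standard.
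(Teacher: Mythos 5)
Your reduction (it suffices to show that $\abs{A_{m_1}}>0$ forces $\sup_m\abs{A_m}=1$) and your second-moment/equidistribution computation are fine as far as they go, but the entire argument rests on the displayed containment $A_m+\tfrac pq\subseteq A_{qm}$ (mod null), and this step is not just "the main obstacle" --- it has no proof and is not plausible for a general non-monotone $\psi$, which is precisely the setting of the theorem. To realize a translation by $p/q$ you must do one of two things. Either you keep the denominator $n$, which only works when $q\mid n$ (so that $np/q$ is an integer that can be absorbed into $a$), and in that case the inhomogeneous parameter stays $my$: you land back in $A_m$, not in $A_{qm}$, so even the part you can handle does not produce the index change your engine needs. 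Or you clear denominators and pass from $n$ to $qn$, at which point the error you can guarantee is $q\psi(n)$ while membership in $A_{qm}$ requires an error smaller than $\psi(qn)$ --- and for arbitrary $\psi$ these are unrelated (take $\psi$ supported on integers coprime to $q$, so $\psi(qn)\equiv 0$ on every index you would use). Your plan to "discard the bad $n$" cannot work, because the limsup defining $A_m$ may be supported entirely on $n$ coprime to $q$; the bad part need not be null, it can be everything. The appeal to "the same mechanism that already yields the rational-translation invariance of $\bigcup_m A_m$ behind Theorem~\ref{thm:E01}" is a misreading of that proof: the only translation used there is by $1/p$, applied to the piece $C(p,m)$ of the limsup where $p^2\mid n$, and it keeps both $n$ and $m$ fixed; there is no index-raising translation step anywhere in Gallagher's argument to rework.

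For comparison, the paper's proof avoids changing denominators altogether. It starts from Theorem~\ref{thm:E01} and its proof: $\E^{my}=A(p,m)\cup B(p,m)\cup C(p,m)$ for every prime $p$. If $\abs{A(p,m)}>0$, one uses that $x\mapsto px$ maps $A(p^\nu,m)$ into $A(p^{\nu+1},pm)$ --- the denominator $n$ is unchanged (here $p\nmid n$), only $\psi$ is inflated by a constant factor, which is neutralized by Cassels's lemma after the preliminary reduction to $\psi(n)=o(n)$ --- so that $\abs{p^\nu A(p,m)}\leq\abs{A(p,p^\nu m)}\leq\abs{\E^{p^\nu my}}$, and exactness of multiplication by $p$ gives $\abs{p^\nu A(p,m)}>1-\eps$ for large $\nu$, producing the "arbitrarily many $m$" as $p^\nu m$. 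The case $\abs{B(p,m)}>0$ is handled the same way with $x\mapsto px+\tfrac1p$, and if $\abs{A(p,m)}=\abs{B(p,m)}=0$ for infinitely many $p$, the $1/p$-periodicity of $C(p,m)$ together with a Lebesgue density point argument makes $\E^{my}$ itself full. Your proposal engages with none of this and offers no substitute for the index-raising containment it needs, so as it stands the proof has a genuine gap at its central step.
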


These leave open the more obvious zero-one law, where we have a single
fixed inhomogeneous parameter. We will visit this in a future project.

\section{Proofs of Theorems~\ref{thm:sequencecounterex}
  and~\ref{thm:cantorex}: Counterexamples}
\label{sec:midscounterex}

In this section we prove Theorems~\ref{thm:sequencecounterex}
and~\ref{thm:cantorex}. We begin with the following lemma.

\begin{lemma}\label{lem:multilem}
  For any $\eps>0$, integer $\ell\geq 1$, and real numbers
  $y_1, \dots, y_\ell$, the sum
  \begin{equation*}
    \sum_{\max_i \norm{ny_i}<\eps}\frac{1}{n+1}
  \end{equation*}
  diverges.
\end{lemma}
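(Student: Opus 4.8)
The plan is to reduce the lemma to a statement about the gap structure of the solution set. Write $\boldsymbol{y} = (y_1, \dots, y_\ell)$, regarded as a point of the torus $\mathbb{T}^\ell = \RR^\ell/\ZZ^\ell$, and put
\[
  S = \bigl\{\, n \geq 1 : \max_i \norm{n y_i} < \eps \,\bigr\}.
\]
Then $S$ is precisely the set of return times of the point $0$ to the open box $B = (-\eps, \eps)^\ell$ (a neighbourhood of $0$ in $\mathbb{T}^\ell$) under the rotation $R \colon x \mapsto x + \boldsymbol{y}$. The claim I would establish is that $S$ is \emph{syndetic}: there is an $L$ with $S \cap \{\, m+1, \dots, m+L \,\} \neq \emptyset$ for every integer $m \geq 0$. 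This is exactly what is needed, since then, enumerating $S = \{\, n_1 < n_2 < \cdots \,\}$, one has $n_j \leq jL$, whence
\[
  \sum_{n \in S} \frac{1}{n+1} \;\geq\; \sum_{j \geq 1} \frac{1}{jL + 1} \;=\; \infty.
\]
Note that syndeticity, and not merely infinitude of $S$, is the right intermediate target: for a sparse infinite $S$ the sum could well converge.

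To prove syndeticity I would pass to the orbit closure $H = \overline{\{\, n\boldsymbol{y} : n \in \ZZ \,\}}$, a compact subgroup of $\mathbb{T}^\ell$ containing $0$, on which $R$ acts as a rotation by the topological generator $\boldsymbol{y}$, hence minimally. Cover $H$ by the open sets $U_n = (B \cap H) - n\boldsymbol{y}$ for $n \geq 1$: given $p \in H$, the density of the forward orbit $\{\, n\boldsymbol{y} : n \geq 1 \,\}$ in $H$ produces an $n \geq 1$ with $n\boldsymbol{y} \in B - p$, that is, $p \in U_n$. By compactness finitely many of the $U_n$ cover $H$, say $H = U_1 \cup \cdots \cup U_L$ after relabelling. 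Then for every $m \geq 0$ the point $m\boldsymbol{y}$ lies in $H$, hence in some $U_n$ with $1 \leq n \leq L$, which is to say $(m+n)\boldsymbol{y} \in B$, i.e.\ $m+n \in S$. Thus every block of $L$ consecutive positive integers meets $S$, which is syndeticity.

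The one point that takes some care --- and the step I would flag as the crux --- is the density of the \emph{forward} orbit $\{\, n\boldsymbol{y} : n \geq 1 \,\}$ in $H$, equivalently the minimality of the rotation $R$ on $H$. When $\boldsymbol{y}$ has finite order modulo $\ZZ^\ell$, say $N\boldsymbol{y} \in \ZZ^\ell$, this is trivial; in fact one then has $\{\, Nk : k \geq 1 \,\} \subseteq S$ outright and can dispense with the torus picture altogether. When $\boldsymbol{y}$ has infinite order, $H$ is an infinite compact group and so has no isolated points, whence the dense set $\{\, n\boldsymbol{y} : n \in \ZZ \,\} \setminus \{0\}$ meets every symmetric neighbourhood $V$ of $0$; this gives some $m \neq 0$, and hence by symmetry some $m \geq 1$, with $m\boldsymbol{y} \in V$, so that $0 \in \overline{\{\, n\boldsymbol{y} : n \geq 1 \,\}}$, and a routine shifting of indices (replacing negative exponents by large near-return times) upgrades this to density of the whole forward orbit. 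Alternatively --- and this bypasses the case analysis --- one may simply cite the standard fact that rotation by a topological generator of a compact abelian group is minimal, together with the standard fact that in a minimal system every nonempty open set has a syndetic set of return times.
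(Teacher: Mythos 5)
Your argument is correct, but it runs along a different track from the paper's. The paper's proof is a two-line appeal to a measure-theoretic fact: the set $S=\set{n\geq 1 : \max_i\norm{ny_i}<\eps}$ has positive (lower) density in the natural numbers --- a standard property of Bohr sets, provable for instance by equidistribution on the closure of the orbit of $0$ under the translation by $(y_1,\dots,y_\ell)$ --- and the sum of reciprocals over any set of positive density diverges. You instead prove the stronger \emph{structural} statement that $S$ is syndetic, via topological dynamics: minimality of the rotation by a topological generator on the compact orbit-closure subgroup $H$, plus the compactness covering trick, gives bounded gaps, and bounded gaps give $n_j\leq jL$ and hence divergence by comparison with the harmonic series. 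Your route is more self-contained (you actually supply the proof of the intermediate claim, including the only delicate point, density of the \emph{forward} orbit in $H$, which you handle correctly by splitting the finite-order and infinite-order cases), at the cost of being longer; the paper's route is shorter but leaves the positive-density assertion to the reader. Note also that syndeticity implies positive lower density, so your intermediate claim subsumes the paper's; conversely, positive \emph{upper} density already suffices for divergence of the reciprocal sum, so the paper is invoking less than you prove. Two cosmetic points in your write-up: the covering set for a point $p\in H$ should be $(B\cap H)-p$ rather than $B-p$ (harmless, since the orbit lies in $H$), and the finite subcover consists of $U_{n_1},\dots,U_{n_k}$ with $L=\max_i n_i$ rather than literally $U_1,\dots,U_L$ ``after relabelling''; neither affects the conclusion that every block of $L$ consecutive integers meets $S$.
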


\begin{proof}[\textbf{Proof}]
  The numbers $n\geq 1$ such that $\max_i\norm{ny_i}<\eps$ comprise a
  set of positive density in the natural numbers. Therefore, the sum
  of reciprocals diverges.
\end{proof}

\begin{proof}[\textbf{Proof of Theorem~\ref{thm:sequencecounterex}}]
  Let $\set{y_i}$ be a sequence of real numbers. Let
  $\set{n_m^{(1)}}_m$ be the sequence of times when
  $\norm{n_m^{(1)} y_1}\leq 2^{-2}$, and let
  \begin{equation*}
    K_1 = \prod_{m=1}^{M_1} \parens*{n_m^{(1)}+1}
  \end{equation*}
  where $M_1>0$ is chosen so that
  \begin{equation*}
    \sum_{m=1}^{M_1} \frac{1}{n_m^{(1)}+1} \geq 2^2.
  \end{equation*}
  Such an $M_1$ exists by Lemma~\ref{lem:multilem}.
  
  Inductively, let $\set{n_m^{(j)}}_m$ be the sequence of times
  $> K_{j-1}$ when
  \begin{equation}
    \max_{i=1, \dots, j}\norm{n_m^{(j)} y_i}\leq 2^{-j-1},\label{eq:1}
  \end{equation}
  and let
  \begin{equation*}
    K_j = \prod_{m=1}^{M_j} \parens*{n_m^{(j)}+1}
  \end{equation*}
  where $M_j>0$ is chosen so that
  \begin{equation}\label{eq:Mj}
    \sum_{m=1}^{M_j} \frac{1}{n_m^{(j)}+1} \geq 2^{j+1}.
  \end{equation}
  Again, Lemma~\ref{lem:multilem} allows us to choose such an $M_j$.
  
  Let $k_m^{(j)} = K_j/(n_m^{(j)}+1)$. Notice that the $k_m^{(j)}$ are
  pairwise distinct because the $K_j$ form a strictly increasing sequence of
  positive integers, and for any fixed $j$, we have
  \begin{equation*}
    K_{j-1} < k_{M_j}^{(j)} < k_{M_j-1}^{(j)} < \dots < k_{2}^{(j)} <
    k_{1}^{(j)} < K_j,
  \end{equation*}
  by our construction.

  Define
  \begin{equation}\label{eq:def}
    \psi(k) =
    \begin{dcases}
      \frac{k}{K_j}2^{-j-1} =\frac{2^{-j-1}}{n_m^{(j)}+1}
      &\textrm{for } k=k_m^{(j)},  m=1, \dots, M_j \\
      0 &\textrm{otherwise.}
    \end{dcases}
  \end{equation}
  Let $y\in\set{y_i}$, and let us set the notation
  \begin{equation*}
    E_n^y(\psi) = \frac{\ZZ+y}{n} + \parens*{-\frac{\psi(n)}{n}, \frac{\psi(n)}{n}}
  \end{equation*}
  and
  \begin{equation*}
    E_n^y(\eps) = \frac{\ZZ+y}{n} + \parens*{-\frac{\eps}{n}, \frac{\eps}{n}}
  \end{equation*}
  when the argument is a real constant.

  The rest of this proof follows from three claims that are proved
  separately below, because they will be used again
  later. Claim~\ref{cl:containment} states that if $y_j$ comes after
  $y\in\set{y_i}$, then
  $E_{k_m^{(j)}}^y(\psi) \subset E_{K_j}^y(2^{-j})$ for all
  $m=1, \dots, M_j$. This implies that
  \begin{equation*}
    \limsup_{n\to\infty}E_n^y(\psi) \subset
    \limsup_{j\to\infty}E_{K_j}^y(2^{-j}).
  \end{equation*}
  Claim~\ref{cl:limsup} shows that
  \begin{equation*}
    \abs*{\limsup_{j\to\infty}E_{K_j}^y(2^{-j})}=0.
  \end{equation*}
  And Claim~\ref{cl:div} shows that $\sum_n \psi(n)=\infty$. This
  proves the theorem.
\end{proof}

\begin{claim}\label{cl:containment}
  If $y_j$ comes after $y\in\set{y_i}$, then
  $E_{k_m^{(j)}}^y(\psi)\subset E_{K_j}^y(2^{-j})$ for all
  $m=1, \dots, M_j$.
\end{claim}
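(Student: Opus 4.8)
The plan is to translate the asserted set-containment into an inequality for $\norm{\cdot}$, the distance to the nearest integer, and then close it with a one-line triangle-inequality estimate. First I would record the elementary reformulation: for an integer $n$ and a non-negative real $\alpha$, a real number $x$ lies in the neighbourhood $\frac{\ZZ+y}{n}+(-\alpha/n,\alpha/n)$ precisely when $\norm{nx-y}<\alpha$ (pick $a$ to be the nearest integer to $nx-y$, and conversely). Thus $x\in E_{k_m^{(j)}}^y(\psi)$ means $\norm{k_m^{(j)}x-y}<\psi(k_m^{(j)})$, and the goal is to deduce $\norm{K_jx-y}<2^{-j}$, which is exactly $x\in E_{K_j}^y(2^{-j})$. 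Since nothing in the argument depends on $m$ beyond the stated properties, treating an arbitrary $m\in\set{1,\dots,M_j}$ handles all of them simultaneously.

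Next I would exploit the key identity $K_j=k_m^{(j)}(n_m^{(j)}+1)$, which is immediate from the definition $k_m^{(j)}=K_j/(n_m^{(j)}+1)$. Fix $x\in E_{k_m^{(j)}}^y(\psi)$ and write $k_m^{(j)}x=a+y+\delta$ for the relevant integer $a$, where $\abs{\delta}<\psi(k_m^{(j)})=2^{-j-1}/(n_m^{(j)}+1)$ by~\eqref{eq:def}. Multiplying by $n_m^{(j)}+1$ and isolating the integer part gives
\[
  K_jx-y=\bigl[(n_m^{(j)}+1)a\bigr]+n_m^{(j)}y+(n_m^{(j)}+1)\delta,
\]
where the bracketed quantity is an integer, so $\norm{K_jx-y}=\norm{n_m^{(j)}y+(n_m^{(j)}+1)\delta}$.

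Finally, the triangle inequality for $\norm{\cdot}$, together with $\norm{t}\leq\abs{t}$, bounds this by $\norm{n_m^{(j)}y}+(n_m^{(j)}+1)\abs{\delta}$. For the first summand I use the hypothesis that $y_j$ comes after $y$, i.e.\ $y=y_i$ for some $i\leq j$: the defining property~\eqref{eq:1} of the sequence $\set{n_m^{(j)}}_m$ gives $\norm{n_m^{(j)}y}\leq 2^{-j-1}$. For the second, $(n_m^{(j)}+1)\abs{\delta}<(n_m^{(j)}+1)\cdot 2^{-j-1}/(n_m^{(j)}+1)=2^{-j-1}$. Adding, $\norm{K_jx-y}<2^{-j}$, as required. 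I do not expect a genuine obstacle here; the only points to watch are keeping the integer multiple $(n_m^{(j)}+1)a$ separated before invoking the triangle inequality, and observing that it is the \emph{strict} bound on $\delta$ that upgrades the sum $2^{-j-1}+2^{-j-1}$ to a strict inequality $<2^{-j}$, which is what the open set $E_{K_j}^y(2^{-j})$ demands.
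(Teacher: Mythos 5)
Your proof is correct and is essentially the paper's argument recast pointwise: where the paper establishes the shifted-lattice containment $\frac{\ZZ+y}{k_m^{(j)}}\subset\frac{\ZZ+y}{K_j}+\brackets*{-\frac{2^{-j-1}}{K_j},\frac{2^{-j-1}}{K_j}}$ via $\frac{1}{K_j}\min_\ell\abs*{n_m^{(j)}y-\ell}\leq\frac{2^{-j-1}}{K_j}$ and then adds the interval radius $\psi(k_m^{(j)})/k_m^{(j)}=2^{-j-1}/K_j$, you express the same two estimates as $\norm*{K_jx-y}\leq\norm*{n_m^{(j)}y}+(n_m^{(j)}+1)\abs{\delta}<2^{-j-1}+2^{-j-1}$. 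Both versions rest on the identity $K_j=(n_m^{(j)}+1)k_m^{(j)}$ together with the bound~\eqref{eq:1}, and your handling of the strict versus non-strict inequalities is exactly what the open target interval requires, so nothing further is needed.
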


\begin{proof}[\textbf{Proof}]
  First, we show that
  \begin{equation*}
    \frac{\ZZ + y}{k_m^{(j)}} \subset \frac{\ZZ+y}{K_j}
    + \brackets*{-\frac{2^{-j-1}}{K_j}, \frac{2^{-j-1}}{K_j}}.
  \end{equation*}
  For this, it is enough to show that $y/k_m^{(j)}$ is within a
  distance of $2^{-j-1}/K_j$ from an element $(\ell + y)/K_j$ of
  $(\ZZ+y)/K_j$. Then all elements of $(\ZZ+y)/k_m^{(j)}$ will also
  be, because they will just be shifts of $y/k_m^{(j)}$ by integer
  multiples of $1/k_m^{(j)}$, which are of course integer multiples of
  $1/K_j$. But
  \begin{equation*}
    \min_{\ell} \abs*{\frac{y}{k_m^{(j)}} - \frac{\ell +
        y}{K_j}} = \frac{1}{K_j}\min_{\ell} \abs*{n_m^{(j)}y - \ell}
    \leq \frac{2^{-j-1}}{K_j},
  \end{equation*}
  which proves it.

  Now we have
  \begin{multline*}
    E_{k_m^{(j)}}^y(\psi) = \frac{\ZZ+y}{k_m^{(j)}}
    + \parens*{-\frac{\psi(k_m^{(j)})}{k_m^{(j)}},\frac{\psi(k_m^{(j)})}{k_m^{(j)}}}\\
    \subset\frac{\ZZ+y}{K_j}
    + \parens*{-\frac{\psi(k_m^{(j)})}{k_m^{(j)}} -
      \frac{2^{-j-1}}{K_j},\frac{\psi(k_m^{(j)})}{k_m^{(j)}}+\frac{2^{-j-1}}{K_j}}\\
    =\frac{\ZZ+y}{K_j}
    + \parens*{-\frac{2^{-j}}{K_j},\frac{2^{-j}}{K_j}} =
    E_{K_j}^y(2^{-j}),
  \end{multline*}
  which is what we wanted to prove.
\end{proof}

\begin{claim}\label{cl:limsup}
  $\abs{\limsup_{j\to\infty}E_{K_j}^y(2^{-j})} = 0.$
\end{claim}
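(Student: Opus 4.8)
The plan is to bound the measure of each set $E_{K_j}^y(2^{-j})$ and apply the (convergence) Borel--Cantelli Lemma. The set $E_{K_j}^y(2^{-j})$ is a union of intervals centered at the points $(\ell+y)/K_j$ for $\ell\in\ZZ$; within the unit interval there are exactly $K_j$ such points, each carrying an interval of length $2\cdot 2^{-j}/K_j$. Hence, working modulo $1$,
\begin{equation*}
  \abs*{E_{K_j}^y(2^{-j}) \cap [0,1)} \leq K_j \cdot \frac{2\cdot 2^{-j}}{K_j} = 2^{-j+1}.
\end{equation*}
Since $\sum_j 2^{-j+1} < \infty$, the Borel--Cantelli Lemma gives $\abs{\limsup_{j\to\infty} E_{K_j}^y(2^{-j})} = 0$ (the $\limsup$ being $1$-periodic, it suffices to check it in $[0,1)$). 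This is the whole argument.

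The only point requiring a word of care is that the interval of radius $2^{-j}/K_j$ around a point of $(\ZZ+y)/K_j$ genuinely has measure $2\cdot 2^{-j}/K_j$ and that consecutive such points are spaced $1/K_j$ apart, so the intervals are pairwise disjoint once $2^{-j}<1/2$; this holds for all $j\geq 1$, so there is no overlap to subtract and the bound above is in fact an equality up to the truncation at the endpoints of $[0,1)$. Since we only need an upper bound, even the crude inequality suffices. There is no real obstacle here; the claim is a routine measure estimate, and the substance of the construction lies in Claims~\ref{cl:containment} and~\ref{cl:div}, which do the work of forcing the inclusion into these small sets while keeping $\sum_n\psi(n)$ divergent.
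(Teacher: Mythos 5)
Your proof is correct and is essentially identical to the paper's: both bound $\abs{E_{K_j}^y(2^{-j})\cap[0,1)}$ by $2^{-j+1}$, apply the convergence Borel--Cantelli Lemma, and invoke $1$-periodicity to pass from $[0,1)$ to all of $\RR$. The only difference is that you spell out the elementary counting of the $K_j$ centers and the disjointness of the intervals, which the paper leaves implicit.
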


\begin{proof}[\textbf{Proof}]
  The sum
  \begin{equation*}
    \sum_j \abs*{E_{K_j}^y(2^{-j})\cap[0,1)} = \sum_j 2^{-j+1}
  \end{equation*}
  converges, therefore by the Borel--Cantelli Lemma we have
  $\abs{\limsup_{j\to\infty}E_{K_j}^y(2^{-j})\cap[0,1)} = 0$. The
  claim follows because $\limsup_{j\to\infty}E_{K_j}^y(2^{-j})$ is
  $1$-periodic.
\end{proof}

\begin{claim}\label{cl:div}
  $\sum_n\psi(n) = \infty.$
\end{claim}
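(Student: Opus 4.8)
The plan is to evaluate $\sum_n \psi(n)$ directly, grouping the terms by the construction index $j$. The function $\psi$ from~\eqref{eq:def} is supported exactly on the points $k_m^{(j)}$ with $j\geq 1$ and $1\leq m\leq M_j$, and these points are pairwise distinct --- this was recorded immediately after their definition, using that the $K_j$ form a strictly increasing sequence together with the chain $K_{j-1} < k_{M_j}^{(j)} < \dots < k_1^{(j)} < K_j$. Hence there is no collision between the terms coming from different pairs $(j,m)$, and
\begin{equation*}
  \sum_n \psi(n) = \sum_{j=1}^\infty \sum_{m=1}^{M_j} \psi\parens*{k_m^{(j)}} = \sum_{j=1}^\infty 2^{-j-1}\sum_{m=1}^{M_j} \frac{1}{n_m^{(j)}+1},
\end{equation*}
where the second equality just substitutes the value $\psi(k_m^{(j)}) = 2^{-j-1}/(n_m^{(j)}+1)$ recorded in~\eqref{eq:def}.

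From here I would invoke the defining inequality~\eqref{eq:Mj} for $M_j$, which states that $\sum_{m=1}^{M_j} 1/(n_m^{(j)}+1) \geq 2^{j+1}$. This forces the $j$-th inner sum to contribute at least $2^{-j-1}\cdot 2^{j+1} = 1$ to the total, so $\sum_n \psi(n) \geq \sum_{j\geq 1} 1 = \infty$, which is the claim.

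There is no genuine obstacle here; the only point requiring care is the bookkeeping in the first display --- namely that the supports attached to distinct indices $(j,m)$ do not overlap, so that the sum over $n$ really does decompose as the stated double sum with no undercounting. This is no accident: the quantities $M_j$ were chosen (which Lemma~\ref{lem:multilem} permits) precisely so that a mass of at least $1$ is deposited at each scale $j$, which forces divergence of $\sum_n\psi(n)$ while simultaneously keeping the measures $\abs*{E_{k_m^{(j)}}^y(\psi)}$ at scale $j$ small enough to telescope into a convergent series --- exactly the balance exploited in Claims~\ref{cl:containment} and~\ref{cl:limsup}.
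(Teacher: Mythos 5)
Your computation is correct and is essentially identical to the paper's own proof: group the support points $k_m^{(j)}$ by $j$, substitute the value from~\eqref{eq:def}, and apply~\eqref{eq:Mj} to bound each inner sum below by $2^{j+1}$, so each scale contributes at least $1$. Your explicit appeal to the pairwise distinctness of the $k_m^{(j)}$ is the same bookkeeping point the paper records just before defining $\psi$, so nothing is missing.
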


\begin{proof}[\textbf{Proof}]
  We compute the sum:
  \begin{align*}
    \sum_n \psi(n) &= \sum_j \sum_{m=1}^{M_j} \psi(k_m^{(j)}) \\
                   &= \sum_j \sum_m \frac{k_m^{(j)}}{K_j}2^{-j-1}\\
                   &= \sum_j2^{-j-1}\sum_{m=1}^{M_j} \frac{1}{n_m^{(j)} + 1} \\
                   &\overset{(\ref{eq:Mj})}{\geq} \sum_j 1,
  \end{align*}
  which diverges.
\end{proof}

\begin{proof}[\textbf{Proof of Theorem~\ref{thm:cantorex}}]
  Let $\set{n_m^{(0)}}_{m=1}^{M_0}$ be such that
  \begin{equation*}
    \sum_{m=1}^{M_0} \frac{1}{n_m^{(0)}+1} \geq 2
  \end{equation*}
  and let
  \begin{equation*}
    K_0 = \prod_{m=1}^{M_0} \parens*{n_m^{(0)}+1}.
  \end{equation*}
  Let $C_0$ be the set of all $y\in[0,1]$ such that
  $\norm{n_m^{(0)}y}\leq 2^{-1}$ for $m=1, \dots, M_0$. Then
  $C_0=\bigcup_{i=1}^{i_0}I_i^{(0)}$ is a finite union of subintervals
  of $[0,1]$. (Actually, since $\norm{\cdot}$ \emph{always} takes
  values $\leq 2^{-1}$, we have $C_0=[0,1]$. This $0$th step is really
  only here to seed the inductive process.)
  
  We proceed inductively. For $j\geq 1$, let $L_j > K_{j-1}$ be such
  that $[0,1)\subset L_jI_i^{(j-1)}$ for all $i=1,\dots, i_{j-1}$.
  Pick an integer $\ell_j \geq 1$ and real numbers
  $y_1^{(j)}, \dots, y_{\ell_j}^{(j)}\in C_{j-1}$ and let
  $\set{n_m^{(j)}}_{m=1}^{M_j}$ be such that $n_m^{(j)}\geq L_j$,
  $\max_i\norm{n_m^{(j)}y_i^{(j)}}\leq 2^{-j-1}$, and
  \begin{equation}\label{eq:Mj}
    \sum_{m=1}^{M_j} \frac{1}{n_m^{(j)}+1} \geq 2^{j+1}.
  \end{equation}
  It is guaranteed that we can do this by Lemma~\ref{lem:multilem}.
  Let
  \begin{equation*}
    K_j = \prod_{m=1}^{M_j} \parens*{n_m^{(j)}+1}.
  \end{equation*}
  Let $C_j$ be the set of $y\in C_{j-1}$ with
  $\norm{n_m^{(j)}y} \leq 2^{-j-1}$ for all $m=1, \dots, M_j$. Then
  $C_j=\bigcup_{i=1}^{i_j}I_i^{(j)}$ is a finite union of closed
  subintervals of $C_{j-1}$. (These subintervals are non-empty because
  we have constructed them to contain
  $y_1^{(j)}, \dots, y_{\ell_j}^{(j)}$. That they are \emph{contained}
  in $C_{j-1}$ is ensured by the condition $n_m^{(j)}\geq L_j$.) Our
  uncountable ``Cantor-type'' set is $C = \bigcap_{j\geq 0}C_j$.

  Let $k_m^{(j)} = K_j/(n_m^{(j)}+1)$. Define $\psi$
  by~(\ref{eq:def}). Let $y\in C$, and define $E_n^y(\psi)$ and
  $E_{K_j}^y(2^{-j})$ as in the proof of
  Theorem~\ref{thm:sequencecounterex}.

  A simple modification of Claim~\ref{cl:containment} shows that
  \begin{equation*}
    \limsup_{n\to\infty}E_n^y(\psi) \subset
    \limsup_{j\to\infty}E_{K_j}^y(2^{-j}),
  \end{equation*}
  Claim~\ref{cl:limsup} shows that
  \begin{equation*}
    \abs*{\limsup_{j\to\infty}E_{K_j}^y\parens*{2^{-j}}}=0,
  \end{equation*}
  and Claim~\ref{cl:div} shows that $\sum_n \psi(n)=\infty$. This
  proves the theorem.
\end{proof}

\begin{remark}\label{rem:moreover}
  Notice that the counterexamples in
  Theorems~\ref{thm:sequencecounterex} and~\ref{thm:cantorex} also
  serve as counterexamples for the homogeneous case, because
  Claim~\ref{cl:containment} holds for $y=0$. More strikingly, they
  work for any rational combination of $1$ with elements of the
  sequence $\set{y_i}$ (in the case of
  Theorem~\ref{thm:sequencecounterex}, and the set $C$ in the case of
  Theorem~\ref{thm:cantorex}), as we will now sketch.

  Suppose $w_1, \dots, w_\ell\in\set{y_i}$ and let
  $z = a_1 w_1 + \dots + a_\ell w_\ell + b$, were
  $a_1, \dots, a_\ell, b$ are rational. We leave as an exercise to
  show that there exist positive integers $c_1, c_2$ depending on
  $a_1, \dots, a_\ell, b$ such that
  \begin{equation*}
    E_{k_m^{(j)}}^z(\psi) \subset E_{c_1 K_j}^z\parens*{c_2 2^{-j}}
  \end{equation*}
  for all sufficiently large $j$ and $m=1, \dots, M_j$, where $\psi$
  is the counterexample constructed in the proof of
  Theorem~\ref{thm:sequencecounterex}. (One can take $c_1$ as a common
  denominator for $a_1, \dots, a_\ell, b$, while $c_2$ can be taken
  sufficiently large.) Therefore,
  \begin{equation*}
    \limsup_{n\to\infty}E_{n}^z(\psi) \subset \limsup_{j\to\infty} E_{c_1 K_j}^z\parens*{c_2 2^{-j}}.
  \end{equation*}
  But an application of the Borel--Cantelli Lemma shows that
  \begin{equation*}
    \limsup_{j\to\infty} E_{c_1 K_j}^z\parens*{c_2 2^{-j}}
  \end{equation*}
  has measure zero.
\end{remark}

What is striking in the above remark is not the fact that one can find
a counterexample for all of these rational combinations. (After all,
Theorem~\ref{thm:sequencecounterex} already guarantees this, since the
rational span of $\set{1, y_i}$ is itself a denumerable set.) It is
the fact that all these rational combinations come for free from the
construction, without having to enumerate them as a new sequence. This
leads us to ask whether this is just a side-effect of our particular
construction, or if it is unavoidable.

\begin{question}\label{q:ratcomb}
  Suppose $\psi$ is an approximating function such that for any $y$
  among a fixed set $\set{y_1, \dots, y_\ell}$ of real numbers, almost
  no real number $x$ satisfies $\norm{nx + y}<\psi(n)$ with infinitely
  many integers $n$. Does the same necessarily hold for all rational
  combinations of the $y_i$'s with $1$?
\end{question}

A `yes' would imply in particular that the homogeneous situation is
all we need to study. In other words, we can ask the following \emph{a
  priori} weaker question.

\begin{question}\label{q:hominhom}
  Suppose that almost every real $x$ satisfies $\norm{nx}<\psi(n)$
  with infinitely many integers $n\geq 1$. Does this imply that for
  any real $y$, almost every $x$ satisfies $\norm{nx+y}<\psi(n)$ with
  infinitely many integers $n\geq 1$? That is, if almost every real
  number is homogeneously $\psi$-approximable, does this imply that
  almost every real number is \emph{inhomogeneously}
  $\psi$-approximable with any inhomogeneous parameter?
\end{question}

It seems reasonable to think so. After all, it is already known to be
true for monotonic approximating functions, by Khintchine's Theorem
and its inhomogeneous analogue. Still, one may pursue the following
counter-question, perhaps in search of a contradiction.

\begin{question}\label{q:noyes}
  Let $\set{y_i}$ and $\set{z_i}$ be disjoint sequences of real
  numbers. Is there an approximating function $\psi$ such that
  \begin{itemize}
  \item for any $y\in\set{y_i}$, \textbf{almost no} real numbers $x$
    satisfy the inequality $\norm{nx+y}< \psi(n)$ with infinitely many
    integers $n\geq 1$, while
  \item for any $z\in\set{z_i}$, \textbf{(almost) all} real numbers
    $x$ satisfy the inequality $\norm{nx+z}< \psi(n)$ with infinitely
    many integers $n\geq 1$?
  \end{itemize}
\end{question}

\begin{remark*}
  Note that the second condition forces $\sum_n\psi(n)$ to diverge.
\end{remark*}

If the answers to Questions~\ref{q:ratcomb} and~\ref{q:hominhom} are
`yes'---and we suspect they are---we can still ask
Question~\ref{q:noyes} with the additional stipulation that every
$z\in\set{z_i}$ lies outside the rational span of $1$ and
$\set{y_i}$. This leads to Theorem~\ref{thm:moreover}, which we handle
in~\S\ref{sec:proof-theor-refthm:m}.

\section{Proof of Theorem~\ref{thm:moreover}: Dynamically defined
  covering systems}
\label{sec:proof-theor-refthm:m}

For the integers $T:=\set{1 < T_1 < \dots < T_M}$ and
$R:=\set{R_1, \dots, R_M}$, consider the \emph{system of residues}
\begin{equation*}
  \calS(T, R) := \set{n\in\ZZ : n \equiv
    R_m \;(\bmod\;T_m) \textrm{ for some } m=1, \dots, M}.
\end{equation*}
We say $\calS(T,R)$ is a \emph{covering system} if it contains
\emph{all} the integers. Erd\H{o}s asked the following question in
1950: \emph{Do there exist covering systems with arbitrarily large
  $T_1$?} He conjectured that there did exist such covering
systems~\cite{Erdoscongruences}, and even offered a cash prize for a
proof~\cite{Erdos2}. However, the conjecture was \emph{disproved} by
Hough in 2015; he showed that there is no covering system with
$T_1 > 10^{16}$~\cite[Theorem~1]{Hough}. The proof builds on work of
Filaseta, Ford, Konyagin, Pomerance, and Yu where among other things
they studied the asymptotic density of the \emph{complement} of a
given system of residues (\cite{FFKPY}, 2007).

Notice that a system of residues $\calS(T,R)$ (whether it is covering
or not) is a periodic subset of the integers. If we center an interval
$\parens*{-\frac{1}{2}, \frac{1}{2}}$ around each point of the system,
then the resulting union of intervals is also periodic. Then the
asymptotic density of the system of residues can be interpreted as the
proportion of the real line that is occupied by this union of
intervals. Since this proportion is invariant under scaling, we can
contract by a multiple of the period, and interpret the proportion of
the real line occupied by our union of intervals as exactly the
measure of the contracted set intersected with the unit interval
$[0,1)$.

We now ask a related question. Let us start with an $M$-tuple
$\bt := (t_1, \dots, t_M)$ of integers such that
$1\leq t_1 < \dots < t_M$, and a real number $\eps>0$, and let us
consider translations of $t_m\ZZ$ by \emph{any} real numbers
$\br:=(r_1, \dots, r_M)$. Again, the union of these is periodic.  How
much of the real line can we occupy by centering an interval
$\parens{-\frac{\eps}{2}, \frac{\eps}{2}}$ at each point of this set?
What we are really asking is: how much of $[0,1)$ is occupied by
\begin{equation}\label{eq:set}
  \bigcup_{m=1}^M\frac{t_m\ZZ + r_m}{K} + \parens*{-\frac{\eps}{2K}, \frac{\eps}{2K}}\cap [0,1),
\end{equation}
where $K = \prod t_i$ (or, alternatively,
$K=\operatorname{lcm}(t_1, \dots, t_M)$, or any multiple thereof)?
Hough's result suggests that if $t_1/\eps>10^{16}$, then the
set~\eqref{eq:set} cannot equal $[0,1)$. But what about the
\emph{measure} of the set~\eqref{eq:set}?  It will be convenient to
denote by $\mu(\eps, \bt, \br)$ the measure of its complement, and
define
\begin{equation*}
  \alpha(\eps, \bt) := \prod_{m=1}^M\parens*{1 - \frac{\eps}{t_m}}.
\end{equation*}
Now we are asking how \emph{small} we can make $\mu(\eps, \bt, \br)$.
For this, we can derive some answers from~\cite{FFKPY}. For example,
the following lemma shows that the expected value of $\mu$ is
$\alpha$.

\begin{lemma}[Version of~{\cite[Lemma~5.1]{FFKPY}} for real residues]\label{lem:filaetal}
  For any fixed $M$-tuple $\bt=(t_1, \dots, t_M)$ of integers with
  $1\leq t_1 < \dots < t_M$ and any $\eps>0$, the expected value of
  $\mu(\eps, \bt, \br)$ is $\alpha(\eps, \bt)$.
\end{lemma}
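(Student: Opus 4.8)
The plan is to prove this by linearity of expectation together with the independence of the $M$ translated lattices, exactly in the spirit of~\cite[Lemma~5.1]{FFKPY}, the only change being that the uniform probability measure on residues modulo $t_m$ is replaced by Lebesgue measure on $[0,t_m)$. First I would pin down the probability space: since the set in~\eqref{eq:set} is unchanged when $r_m$ is replaced by $r_m+t_m$, the quantity $\mu(\eps,\bt,\br)$ depends on each $r_m$ only modulo $t_m$, so a ``random'' $\br$ means that $r_1,\dots,r_M$ are independent with $r_m$ uniform on $[0,t_m)$ (equivalently, uniform on any interval whose length is a positive multiple of $t_m$). We may also assume $\eps<t_1$; for $\eps\geq t_1$ the first lattice alone already covers $[0,1)$ up to a null set, so $\mu\equiv 0$, and the assertion is either trivial ($\eps=t_1$, where $\alpha=0$) or falls outside the range where $\alpha$ is non-negative. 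For $m=1,\dots,M$ set
\begin{equation*}
  A_m=A_m(r_m):=\parens*{\frac{t_m\ZZ+r_m}{K}+\parens*{-\frac{\eps}{2K},\frac{\eps}{2K}}}\cap[0,1),
\end{equation*}
so that the set in~\eqref{eq:set} is $\bigcup_{m=1}^M A_m$ and
\begin{equation*}
  \mu(\eps,\bt,\br)=\abs*{\,[0,1)\setminus\bigcup_{m=1}^M A_m\,}=\abs*{\,\bigcap_{m=1}^M\parens*{[0,1)\setminus A_m}\,}=\int_0^1\prod_{m=1}^M\bone_{[0,1)\setminus A_m}(x)\,dx.
\end{equation*}

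Next I would integrate over $\br$ and interchange the integrals. The integrand above is jointly measurable in $(x,\br)$ and non-negative, so Tonelli's theorem applies; moreover, for each fixed $x$ the random variables $\bone_{[0,1)\setminus A_m}(x)$, $m=1,\dots,M$, are independent, since $A_m$ is a function of $r_m$ alone. Hence
\begin{equation*}
  \mathbb{E}\brackets*{\mu(\eps,\bt,\br)}=\int_0^1\mathbb{E}\brackets*{\prod_{m=1}^M\bone_{[0,1)\setminus A_m}(x)}\,dx=\int_0^1\prod_{m=1}^M\Pr\brackets*{x\notin A_m}\,dx.
\end{equation*}
It then remains to evaluate $\Pr[x\in A_m]$ for a fixed $x\in[0,1)$. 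Unwinding the definition, $x\in A_m$ if and only if $Kx-r_m$ lies within $\eps/2$ of a multiple of $t_m$; since $r_m$ is uniform over a full period $[0,t_m)$, the residue of $Kx-r_m$ modulo $t_m$ is uniform on $[0,t_m)$, so this event has probability $\eps/t_m$ (here $\eps<t_1\leq t_m$ is what guarantees that the relevant arc does not wrap around and overlap itself, giving $\eps/t_m$ rather than $\min\set*{1,\eps/t_m}$). Thus $\Pr[x\notin A_m]=1-\eps/t_m$, independently of $x$, and therefore
\begin{equation*}
  \mathbb{E}\brackets*{\mu(\eps,\bt,\br)}=\int_0^1\prod_{m=1}^M\parens*{1-\frac{\eps}{t_m}}\,dx=\prod_{m=1}^M\parens*{1-\frac{\eps}{t_m}}=\alpha(\eps,\bt),
\end{equation*}
as claimed.

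I do not expect a genuine obstacle here: the whole content is linearity of expectation (Tonelli) plus the independence of the coordinates $r_1,\dots,r_M$ and the translation invariance of the uniform measure on one period. The only points that need a little care are the ones flagged above --- specifying the probability space so that ``uniform $r_m$'' is unambiguous (this is exactly the spot where passing from integer residues to real residues matters, and where the input from~\cite{FFKPY} is being adapted), the joint measurability needed to invoke Tonelli, and the mild non-degeneracy hypothesis $\eps<t_1$.
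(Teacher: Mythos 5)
Your proof is correct and is essentially the same argument as the paper's: the paper's one-line proof is exactly this average over $\br$ (uniform over a full period in each coordinate) of the measure of the intersection of the complements, evaluated via Fubini--Tonelli and the independence of the coordinates $r_m$. Your extra observation that one needs $\eps\leq t_1$ (so that $\Pr[x\in A_m]=\eps/t_m$ rather than $\min\set*{1,\eps/t_m}$) is a legitimate point that the paper leaves implicit, since in the intended applications $\eps$ is small.
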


\begin{proof}[\textbf{Proof}]
  Letting
  \begin{equation*}
    E(m,r) := \frac{t_m\ZZ + r}{K}
    + \parens*{-\frac{\eps}{2K}, \frac{\eps}{2K}},
  \end{equation*}
  the expectation is the integral
  \begin{equation*}
    \frac{1}{K^M}\int_{[0,K)^M} \abs*{\widehat E(1,r_1)\cap\dots\cap \widehat E(M,r_M)}\,dr_1\,\dots\,dr_M = \prod_{m=1}^M\parens*{1 - \frac{\eps}{t_m}},
  \end{equation*}
  which proves the lemma.
\end{proof}

Moreover, Filaseta \emph{et al.} show (in the context of the original
problem of Erd\H{o}s) that deviations from this expected value are on
average extremely small for large $t_1$.

\begin{theorem}[{\cite[Theorem~7]{FFKPY}}]\label{thm:filavariance}
  For any fixed $M$-tuple $\bt = (t_1, \dots, t_M)$ of integers with
  $1\leq t_1 < \dots < t_M$ and $\eps = 1/k$ for some integer
  $k\geq1$, the variance of $\mu(\eps, \bt, \br)$ over integer values
  of $\br$ is
  $\ll \frac{\alpha(\eps, \bt)^2 \log (t_1/\eps)}{(t_1/\eps)^2}$.
\end{theorem}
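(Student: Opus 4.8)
The theorem is, up to a change of notation, \cite[Theorem~7]{FFKPY} itself, so the plan is simply to set up the dictionary between $\mu(\eps,\bt,\br)$ and the density of the set left uncovered by the system of residues $\calS(\bt,\br)$, and then read off the bound. Write $\eps=1/k$ and let $d(\br)$ be the density of $\ZZ\setminus\calS(\bt,\br)$, so that $\abs{\calS(\bt,\br)\cap[0,K)}=K\bigl(1-d(\br)\bigr)$. For integer $\br$ the ``base points'' of the set~\eqref{eq:set} --- the points of $\bigcup_m\frac{t_m\ZZ+r_m}{K}$ lying in $[0,1)$, before thickening --- are exactly the $c/K$ with $c$ an integer of $[0,K)$ in $\calS(\bt,\br)$. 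Distinct base points are at distance at least $1/K$, hence at least the thickening length $\eps/K$ since $\eps\le1$, so the intervals of radius $\eps/(2K)$ placed around them are pairwise disjoint; therefore~\eqref{eq:set} has measure $\eps\bigl(1-d(\br)\bigr)$ and
\begin{equation*}
  \mu(\eps,\bt,\br)=1-\eps\bigl(1-d(\br)\bigr)=(1-\eps)+\eps\,d(\br).
\end{equation*}
(Equivalently, rescaling~\eqref{eq:set} by $kK$ identifies it with the union of unit intervals about the integers covered by $\calS\bigl((kt_1,\dots,kt_M),(kr_1,\dots,kr_M)\bigr)$; every such integer is a multiple of $k$, and $c=ki$ is covered exactly when $i\in\calS(\bt,\br)$.)

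Because $\mu(\eps,\bt,\br)$ depends on $\br$ only modulo $\bt$, averaging over integer $\br$ amounts to averaging each $r_m$ over $\ZZ/t_m\ZZ$, and the displayed identity gives $\operatorname{Var}_{\br}\mu(\eps,\bt,\br)=\eps^2\operatorname{Var}_{\br}d(\br)$. But $d(\br)$ is precisely the density of the uncovered set of a system of residues with integer moduli $t_1<\dots<t_M$ and residues ranging over all of $\prod_m\ZZ/t_m\ZZ$ --- the quantity to which \cite[Theorem~7]{FFKPY} applies with thickening parameter $1$ --- so $\operatorname{Var}_{\br}d(\br)\ll\alpha(1,\bt)^2(\log t_1)/t_1^2$. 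Multiplying by $\eps^2$ and using the elementary monotonicities $\alpha(1,\bt)=\prod_m(1-1/t_m)\le\prod_m(1-\eps/t_m)=\alpha(\eps,\bt)$ (valid as $\eps\le1$) and $\log t_1\le\log(t_1/\eps)$ gives
\begin{equation*}
  \operatorname{Var}_{\br}\mu(\eps,\bt,\br)\ll\eps^2\,\frac{\alpha(\eps,\bt)^2\log(t_1/\eps)}{t_1^2}=\frac{\alpha(\eps,\bt)^2\log(t_1/\eps)}{(t_1/\eps)^2},
\end{equation*}
as claimed. (The precise shape of the bound is no accident: \cite[Theorem~7]{FFKPY} applied directly to the moduli $kt_1<\dots<kt_M$, whose least element is $t_1/\eps$ and for which the relevant product is $\prod_m(1-1/(kt_m))=\alpha(\eps,\bt)$, reproduces it without the final inflation --- and it is the affine identity that justifies this even though the covered residues form only a sub-family of $\prod_m\ZZ/(kt_m)\ZZ$.)

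There is no deep obstacle here; the work is the bookkeeping, and the one point deserving care is the affine identity $\mu=(1-\eps)+\eps\,d$. One must check the disjointness of the thickening intervals --- this is exactly where $\eps\le1$ and the integrality of $\br$ are used; for real $\br$ the intervals can overlap and one recovers only the \emph{expectation}, as in Lemma~\ref{lem:filaetal} --- along with the exact count $K(1-d(\br))$ of base points. It then remains to match the normalization of ``density of the uncovered set'' used in \cite{FFKPY} with $d(\br)$ and to confirm that any implicit largeness hypothesis in their Theorem~7 (on $t_1$, equivalently on $t_1/\eps$) is either present in our statement or harmlessly absorbed into the implied constant.
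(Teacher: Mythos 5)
Your proposal is correct and is essentially the paper's own treatment: the paper offers no proof of this statement, presenting it as a direct citation of \cite[Theorem~7]{FFKPY} rewritten in the paper's notation (with only a remark afterwards that extending to general $\eps>0$ and real $\br$ is ``a computation''). Your dictionary --- the affine identity $\mu(\eps,\bt,\br)=(1-\eps)+\eps\,d(\br)$ for integer $\br$ and $\eps=1/k\le 1$ (using disjointness of the thickening intervals and periodicity), hence $\operatorname{Var}_{\br}\mu=\eps^{2}\operatorname{Var}_{\br}d$, followed by the monotonicity of $\alpha$ and of the logarithm --- is exactly the bookkeeping the paper leaves implicit, and your closing caveat about matching the normalization and any largeness hypothesis in \cite{FFKPY} is the only point that would need to be checked against that source.
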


\begin{remark*}
  It is a computation to verify that Theorem~\ref{thm:filavariance} is
  also true for general $\eps>0$ and with variance over all real
  $M$-tuples $\br$.
\end{remark*}

In the following we conjecture that given any irrational number $z$,
one can find positive integers $t_1<\dots < t_M$ for which the
$M$-tuple $\br = \bt z = (t_1 z, \dots, t_M z)$ is ``typical,'' in the
sense that $\mu(\eps, \bt, \bt z)$ is close to what
Lemma~\ref{lem:filaetal} and Theorem~\ref{thm:filavariance} lead us to
expect. Furthermore, we can choose the $t_m$'s from certain
dynamically defined positive-density subsets of the integers, and such
that $\alpha(\eps, \bt)$ is arbitrarily small. (Note that in the
notation, $n_m+1$ will play the role of $t_m$.)

\begin{conjecture}[Dynamically defined covering systems]\label{lem:conjlem}
  Suppose $y_1, \dots, y_\ell, z$ are real numbers such that $z$ is
  not in the rational span of $1, y_1, \dots, y_\ell$, and let
  $\eps, \delta>0$. Then for any $n_0>0$ there are positive integers
  $\set{n_0 <n_1 < n_2 < \dots < n_M}$ and real numbers
  $w_1, \dots, w_\ell$ such that the following hold:
  \begin{enumerate}
  \item We have $\max_i \norm{n_m y_i-w_i}\leq \eps/2$ for all
    $m = 1, \dots, M$.
  \item The measure of
    \begin{equation}\label{eq:conj}
      \parens*{\bigcup_{m=1}^M\frac{\ZZ+z}{k_m}
        + \parens*{-\frac{\eps}{2K}, \frac{\eps}{2K}}}\cap [0,1)
    \end{equation}
    is at least $1-\delta$, where $K=\prod_{m=1}^M(n_m+1)$ and
    $k_m = K/(n_m+1)$.
  \end{enumerate}
\end{conjecture}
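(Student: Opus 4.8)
The plan is to reduce the conjecture to a single ``typicality'' assertion about the point $\bt z \bmod K$ and then to isolate the one genuinely hard ingredient. First, normalize condition~(1): taking $w_1=\dots=w_\ell=0$ it becomes the requirement that every $n_m$ lie in the Bohr set
\[
  B:=\set{n\geq 1:\max_i\norm{ny_i}\leq\eps/2},
\]
which has positive density in $\ZZ$ (this is the mechanism behind Lemma~\ref{lem:multilem}); in particular $\sum_{n\in B,\,n>N}1/(n+1)$ diverges for every $N$. Next, translate condition~(2) into the notation of this section. Writing $t_m=n_m+1$ one has $\frac{\ZZ+z}{k_m}=\frac{t_m\ZZ+t_mz}{K}$, so the set in~\eqref{eq:conj} is exactly the set~\eqref{eq:set} with $\br=\bt z$. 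Hence the conclusion we want is precisely
\[
  \mu(\eps,\bt,\bt z)\leq\delta
\]
for some tuple $\bt=(n_1+1,\dots,n_M+1)$ with all $n_m\in B$ and $n_m>n_0$.

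Second, I would reduce this to a statement that ``most'' residue tuples work. By Lemma~\ref{lem:filaetal} the average of $\mu(\eps,\bt,\br)$ over $\br$ is $\alpha(\eps,\bt)=\prod_m(1-\eps/t_m)\leq\exp\parens*{-\eps\sum_m 1/(n_m+1)}$. Because $\sum_{n\in B,\,n>N}1/(n+1)$ diverges for every $N$, we may fix $N$ as large as we like and then choose finitely many $n_m\in B\cap(N,\infty)$ with $\sum_m 1/(n_m+1)$ large enough that $\alpha(\eps,\bt)\leq\delta/4$. With $t_1=n_1+1>N$, Theorem~\ref{thm:filavariance}, in the real-residue form noted in the remark following it, bounds the variance of $\mu(\eps,\bt,\cdot)$ by $\ll\alpha(\eps,\bt)^2\log(t_1/\eps)/(t_1/\eps)^2$, so by Chebyshev's inequality the set of ``bad'' $\br\in[0,K)^M$ with $\mu(\eps,\bt,\br)>\delta$ occupies a proportion $\ll\log(N/\eps)/((N/\eps)^2\delta^2)$ of the torus $(\RR/K\ZZ)^M$, a quantity that tends to $0$ as $N\to\infty$. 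So the conjecture follows if, after fixing $N$ so this proportion is tiny, we can choose the $n_m\in B\cap(N,\infty)$ (with $\alpha$ small) so that in addition the single point $\bt z\bmod K$ lands in the complementary ``good'' set.

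Third --- and this is the crux, and the reason the statement is left as a conjecture --- one must show that $\bt z$ can be made typical. Since $\bt z$ is one specific residue tuple, not a random one, this requires a structural input, and the hypothesis $z\notin\mathbb{Q}\text{-span}(1,y_1,\dots,y_\ell)$ is exactly what should provide it: it guarantees that for every nonzero integer $h$ and all integers $k_1,\dots,k_\ell$ one has $\norm{hz+k_1y_1+\dots+k_\ell y_\ell}\neq 0$, whence by Weyl's criterion the sequence $(nz\bmod 1)_{n\in B}$ is equidistributed in $\RR/\ZZ$ (approximate the indicator of $B$ by trigonometric polynomials in $ny_1,\dots,ny_\ell$ and use the rational-independence of $z$ to kill every resulting exponential sum). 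The route I would pursue is to build the $n_m$ greedily from $B$, adjoining one generator at a time and using this equidistribution --- in its joint form for the tuples $(ny_1,\dots,ny_\ell,nz)$ --- to steer each new coordinate $t_mz$ into the range dictated by the partial cover already assembled. An alternative is a second-moment argument over the \emph{selection}: choose the $n_m$ at random from $B\cap(N,N']$ for large $N'$ and show the expectation of $\mu(\eps,\bt,\bt z)$ over this choice is close to $\alpha$, which again comes down to bounding correlations between the thickened progressions $\frac{1}{k_m}\ZZ+\frac{z}{k_m}$ for $m\neq m'$.

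The obstacle shared by both routes is that distinct generators overlap heavily: $\frac{1}{k_m}\ZZ$ and $\frac{1}{k_{m'}}\ZZ$ intersect in $\frac{1}{\gcd(k_m,k_{m'})}\ZZ$, and $\gcd(k_m,k_{m'})$ is enormous (it contains $\prod_{i\neq m,m'}t_i$), so one needs the shifts $z/k_m$ to behave ``randomly'' along these huge common subprogressions for a \emph{fixed} irrational $z$. Quantifying this lies in the same difficult regime as Erd\H{o}s' original covering problem; I would attack it by upgrading the distributional estimates of~\cite{FFKPY} to track the arithmetic of the shifts $t_mz\bmod K$, combined with Erd\H{o}s--Tur\'an-type discrepancy bounds for the exponential sums over the Bohr set $B$ that appear when one expands $\mu(\eps,\bt,\bt z)$ into a Fourier series. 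I expect this correlation control, rather than any of the preliminary reductions, to be where essentially all the work resides.
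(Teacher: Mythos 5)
You have attempted to prove a statement that the paper itself does not prove: Conjecture~\ref{lem:conjlem} is stated only as a hypothesis for Theorem~\ref{thm:moreover}, and the paper offers supporting evidence rather than a proof. Your preliminary reductions reproduce that evidence almost exactly. The rewriting $\frac{\ZZ+z}{k_m}=\frac{t_m\ZZ+t_m z}{K}$, identifying the set in~\eqref{eq:conj} with the set~\eqref{eq:set} for $\br=\bt z$, so that the goal becomes $\mu(\eps,\bt,\bt z)\leq\delta$, is the intended reading; the expectation computation is Lemma~\ref{lem:filaetal}; the variance-plus-Chebyshev step is Theorem~\ref{thm:filavariance} together with the remark following it; and choosing the $n_m$ from the positive-density Bohr set $\set{n:\max_i\norm{ny_i}\leq\eps/2}$ so that $\alpha(\eps,\bt)$ is small is the content of the unnumbered lemma the paper proves right after the conjecture (which keeps general accumulation points $w_i$ where you specialize to $w_i=0$; that simplification is consistent with the paper's remark that the $w$'s are probably superfluous, and with Lemma~\ref{lem:multilem}).

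The genuine gap is the one you candidly flag in your third step: showing that the single deterministic tuple $\bt z$ lands in the ``good'' set. Mean and variance over $\br\in[0,K)^M$ only show the exceptional set has small measure; they say nothing about one particular point, and $\bt z$ is a very special point --- all of its coordinates are multiples of the same real number $z$, so it lies on a measure-zero subset of the torus $(\RR/K\ZZ)^M$ that no second-moment bound over the full torus can reach. Likewise, the Weyl-type equidistribution of $(nz)_{n\in B}$ that the hypothesis $z\notin\mathbb{Q}\text{-span}(1,y_1,\dots,y_\ell)$ provides controls each coordinate $t_m z$ individually, but not the joint behavior of the $M$ strongly dependent shifts along the common subprogressions $\frac{1}{\gcd(k_m,k_{m'})}\ZZ$, whose moduli are divisible by $\prod_{i\neq m,m'}t_i$ and hence enormous --- exactly the correlation problem you describe. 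So your proposal is a correct reduction and an accurate diagnosis of where the difficulty sits, but it is not a proof: the step you leave open is the entire content of the conjecture beyond the paper's own heuristics (the paper's position is precisely that the work of Filaseta--Ford--Konyagin--Pomerance--Yu and Hough makes this typicality overwhelmingly plausible but unproven), and nothing in the paper closes it either.
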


There is good reason to believe Conjecture~\ref{lem:conjlem}. (In
fact, it is even reasonable to believe that the $w$'s are
superfluous.)  In the following lemma we show that we can find
arbitrarily many progressions $\set{n_1 < \dots < n_M}$, with $n_1$
arbitrarily large, such that the \emph{expected} measure
of~(\ref{eq:conj}), as calculated in Lemma~\ref{lem:filaetal}, is as
close to $1$ as we want. And Theorem~\ref{thm:filavariance} suggests
that the actual measure of~\eqref{eq:conj} is extremely likely to be
near its expected value.

\begin{lemma}
  Suppose $y_1, \dots, y_\ell$ are real numbers and $\eps, \delta >0$.
  Suppose that $(w_1, \dots, w_\ell)$ is an accumulation point for the
  orbit of $0$ by $(y_1, \dots, y_\ell)$-translations of the
  $\ell$-dimensional torus. Then for any $n_0>0$ there are integers
  $\set{n_0< n_1 < \dots < n_M}$ such that
  $\max_{i} \norm{n_m y_i - w_i}\leq \eps/2$ for all $m=1, \dots, M$,
  and such that
  $\prod_{m=1}^M\parens*{1 - \frac{\eps}{n_m+1}} < \delta$.
\end{lemma}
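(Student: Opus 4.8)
The plan is to use the hypothesis that $(w_1,\dots,w_\ell)$ is an accumulation point of the orbit $\set{(ny_1,\dots,ny_\ell) : n\geq 1}$ in $(\RR/\ZZ)^\ell$ to harvest infinitely many return times, and then to thin out that set of return times enough that the product $\prod_m(1-\eps/(n_m+1))$ can be driven below $\delta$. Concretely, since $(w_1,\dots,w_\ell)$ is an accumulation point, for every $\eta>0$ and every threshold $N$ there is some $n>N$ with $\max_i\norm{ny_i - w_i}<\eta$; taking $\eta=\eps/2$, this produces an \emph{infinite} set $\calS=\set{n : \max_i\norm{ny_i-w_i}\leq\eps/2}$, all of whose elements exceed any prescribed $n_0$ (by intersecting with $(n_0,\infty)$). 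The only remaining task is to pick a finite subprogression $n_0<n_1<\dots<n_M$ from this infinite set for which $\prod_{m=1}^M(1-\eps/(n_m+1))<\delta$.

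For that last step I would argue as follows. Since $\calS$ is infinite, $\sum_{n\in\calS}\frac{1}{n+1}$ either diverges or, if it converges, is still no obstacle because we can instead exploit the number of available terms. The clean way: choose $n_1<n_2<\dots$ to be the elements of $\calS\cap(n_0,\infty)$ in increasing order, and note that for each fixed $m$ we have $1-\frac{\eps}{n_m+1}\leq \exp\!\parens*{-\frac{\eps}{n_m+1}}$, so
\begin{equation*}
  \prod_{m=1}^M\parens*{1-\frac{\eps}{n_m+1}} \leq \exp\!\parens*{-\eps\sum_{m=1}^M\frac{1}{n_m+1}}.
\end{equation*}
If $\sum_{n\in\calS}\frac{1}{n+1}=\infty$ we are done: choose $M$ so that $\sum_{m=1}^M\frac{1}{n_m+1}>\frac{1}{\eps}\log(1/\delta)$. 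If instead that sum converges, then the $n_m$ grow quickly, but we may still appeal to the fact that $1-\eps/(n+1)$ is bounded away from $1$ for \emph{small} $n\in\calS$; more robustly, one observes that it suffices to ensure the product is small, and since each factor is $<1$ it is enough to have enough factors that are bounded away from $1$ by a fixed amount — but convergence of $\sum 1/(n+1)$ over $\calS$ forces all but finitely many $n\in\calS$ to be large, hence most factors are close to $1$, so this sub-case cannot actually occur for an infinite set with positive density. In fact $\calS$, being the set of $n$ with $\max_i\norm{ny_i-w_i}\leq\eps/2$, is syndetic-like or at least has the property that $\sum_{n\in\calS}1/(n+1)=\infty$ whenever $\calS$ is infinite and the $y_i$ generate only finitely many independent directions — the cleanest route is to note that if $(w_1,\dots,w_\ell)$ is an accumulation point then so is $(2w_1,\dots,2w_\ell)$, $(3w_1,\dots,3w_\ell)$, etc., giving return times in every residue class structure; but for the purposes of this lemma it is enough to invoke that the closure of the orbit is a closed subgroup coset, so the return times to any neighborhood of an accumulation point have positive upper density, whence $\sum_{n\in\calS}1/(n+1)=\infty$ by the same reasoning as in Lemma~\ref{lem:multilem}.

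The main obstacle is precisely this last point: showing that the set of return times $\calS$ is rich enough (positive density, or at least divergent sum of reciprocals) rather than merely infinite. The honest argument uses that the orbit closure $\overline{\set{(ny_1,\dots,ny_\ell)}}$ is a coset $v+H$ of a closed subgroup $H\leq(\RR/\ZZ)^\ell$, that $(w_1,\dots,w_\ell)$ lies in this coset, and that the return times of the (minimal, equicontinuous) translation to any open set form a syndetic set — in particular one of positive lower density. Once positive density is in hand, $\sum_{n\in\calS}1/(n+1)=\infty$ follows exactly as in the proof of Lemma~\ref{lem:multilem}, and then the exponential bound above finishes the proof by choosing $M$ large. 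I would structure the writeup as: (i) identify $\calS$ and record $\max_i\norm{n_m y_i-w_i}\leq\eps/2$ for $n_m\in\calS$; (ii) argue $\calS\cap(n_0,\infty)$ has $\sum 1/(n+1)=\infty$; (iii) apply $1-x\leq e^{-x}$ and pick $M$.
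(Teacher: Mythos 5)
Your proof (in its final form, the ``honest argument'' of the last paragraph) is correct, but it establishes the key density fact by a different mechanism than the paper. The paper does not invoke the topological-dynamics package (orbit closure is a closed subgroup, the translation on it is minimal, return times to nonempty open sets are syndetic). Instead it constructs the return times by hand: it chooses a finite set $S=\set{s_j : j=1,\dots,2^{\ell}}$ of shifts, one for each hyper-octant $\calO_j$ of the box $B=(w_1,\dots,w_\ell)+(-\eps/2,\eps/2)^{\ell}$, with $\calO_j + s_j(y_1,\dots,y_\ell)\subset B$, and then generates a subsequence of return times whose consecutive gaps lie in $S$; bounded gaps give $n_m=O(m)$, so $\sum_m 1/(n_m+1)$ diverges and the product $\prod_{m=1}^{M}\parens*{1-\eps/(n_m+1)}$ can be made smaller than $\delta$, which is exactly your step (iii). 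So both arguments share the same skeleton---syndeticity of the returns near $(w_1,\dots,w_\ell)$, divergence of the reciprocal sum as in Lemma~\ref{lem:multilem}, then $1-x\le e^{-x}$ and a choice of $M$---but the paper substantiates the syndeticity with an explicit, self-contained combinatorial construction, while you appeal to the general structure of orbit closures of torus translations; your route is shorter and softer, the paper's avoids citing dynamical generalities. Two editorial points: your middle paragraph (the hedging about ``if the sum converges,'' the claim that ``this sub-case cannot actually occur,'' and the digression about $(2w_1,\dots,2w_\ell)$) is muddled and unnecessary---delete it and keep only the final argument, adding the one-line remark that discarding the finitely many return times $\le n_0$ does not affect positive density; and the closure of the orbit of $0$ is itself a closed subgroup rather than a proper coset, a harmless slip but worth fixing.
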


\begin{proof}[\textbf{Proof}]
  Let $\set{\bar n_m}$ be the sequence of all positive integers such
  that $\max_{i} \norm{\bar n_m y_i - w_i}\leq \eps/2$. We will show
  that $\set{\bar n_m}$ contains an infinite generalized arithmetic
  subsequence. That is, there is a set
  $S=\set{s_j : j=1, \dots, 2^{\ell}}$ of positive integers and an
  infinite subsequence $\set{n_m}\subset\set{\bar n_m}$ such that
  $n_{m+1} - n_m \in S$ for all $m$.

  We provide a simple construction. Let the $s_j$ be such that
  $\calO_j + s_j (y_1, \dots, y_\ell)$ lies within
  $B :=(w_1, \dots, w_\ell)+(-\eps/2, \eps/2)^\ell$, where $\calO_j$
  is the $j$th hyper-octant of $B$. Then the sequence $\set{n_m}$ can
  be constructed in the following way: Fix an initial term $n_1$ such
  that $n_1 (y_1, \dots, y_\ell)$ lies in $B$. The terms thereafter
  are set by the rule $n_m = n_{m-1}+s_j$ whenever
  $n_{m-1}(y_1, \dots, y_\ell)\in \calO_j$.

  Now a simple calculation shows that by making $M$ large we can make
  $\prod_{m=1}^M\parens*{1 - \frac{\eps}{n_m+1}}$ arbitrarily small.
\end{proof}

\begin{proof}[\textbf{Proof of Theorem~\ref{thm:moreover}}]
  Let $\set{y_i}, \set{z_i}$ be as in the theorem statement, and
  assume Conjecture~\ref{lem:conjlem} to be true. Let us define the
  new sequence
  \begin{equation*}
    \set{\bar z_i} = \set{z_1, z_2, z_1, z_2, z_3, z_1, z_2, z_3,
      z_4, \dots},
  \end{equation*}
  and let $\set{\delta_j}\subset\parens{0,1}$ be a sequence decreasing
  to $0$.
  
  Let $\set{n_1^{(1)}< \dots <n_{M_1}^{(1)}}$ and $w_1^{(1)}$ be the
  progression and number guaranteed by Conjecture~\ref{lem:conjlem},
  with $y_1, w_1^{(1)}, \bar z_1, 2^{-1}, \delta_1$ playing the roles
  of $y_1, w_1, z, \eps, \delta$ respectively. Let
  \begin{equation*}
    K_1 = \prod_{m=1}^{M_1} \parens*{n_m^{(1)}+1}.
  \end{equation*}  
  Inductively, let $\set{n_1^{(j)}< \dots <n_{M_j}^{(j)}}$ and
  $w_1^{(j)}, \dots, w_j^{(j)}$ be the progression and real numbers
  guaranteed by Conjecture~\ref{lem:conjlem}, with
  $y_1, \dots, y_j, w_1^{(j)}, \dots, w_j^{(j)}, \bar z_j, 2^{-j},
  \delta_j$
  playing the roles of
  $y_1, \dots, y_\ell, w_1, \dots, w_\ell, z, \eps, \delta$,
  respectively, such that $n_1^{(j)}\geq K_{j-1}$. Let
  \begin{equation*}
    K_j = \prod_{m=1}^{M_j} \parens*{n_m^{(j)}+1}.
  \end{equation*}

  We may now define $\psi$ by~(\ref{eq:def}), as in the proof of
  Theorem~\ref{thm:sequencecounterex}. Our construction guarantees
  that for every $z\in\set{z_i}$ we have
  $\abs{\limsup_{n}E_n^z(\psi)} = 1$. A simple modification of
  Claim~\ref{cl:containment} shows that for any $i$ we will have
  \begin{equation*}
    \limsup_{n\to\infty} E_n^{y_i}(\psi) \subset \limsup_{j\to\infty}
    E_{K_j}^{y_i+w_i^{(j)}}\parens*{2^{-j}}.
  \end{equation*}
  And the proof of Claim~\ref{cl:limsup} shows that
  \begin{equation*}
    \abs*{\limsup_{j\to\infty}
      E_{K_j}^{y_i+w_i^{(j)}}\parens*{2^{-j}}}=0,
  \end{equation*}
  so we have $\abs{\limsup_n E_n^y(\psi)}=0$ for any
  $y\in\set{y_i}$. This proves Theorem~\ref{thm:moreover}.
\end{proof}

\section{Proof of Theorem~\ref{thm:arbitrary}: Equidistributed
  inhomogeneous parameters}
\label{sec:count-inhom-doubly}

For every positive integer $m$ let $f(x,m)\geq 0$ be an integrable
function of the real variable $x$. Let
\begin{equation*}
  \underline\mu(f) := \liminf_{M\to\infty}\frac{1}{M}\sum_{m=1}^M\int_0^1
  f(x, m)\,dx
\end{equation*}
and
\begin{equation*}
  \overline\mu(f) := \limsup_{M\to\infty}\frac{1}{M}\sum_{m=1}^M\int_0^1
  f(x, m)\,dx.
\end{equation*}
If the two coincide, denote
$\mu(f)=\underline\mu(f) = \overline\mu(f)$. For a set $A$ of pairs
$(x,m)$ such that $A_{k} = \set{(x,m)\in A \mid m=k}$ is
measurable for all $k$, denote $\mu(A):=\mu(\bone_A)$, and similarly for
$\underline\mu(A)$ and $\overline\mu(A)$. Notice that we will always
have $\underline\mu(A)\leq \overline\mu(A)\leq 1$.

The goal of this section is to prove the following ``doubly metric''
statement, from which will follow Theorem~\ref{thm:arbitrary}.

\begin{theorem}\label{thm:doublymetric}
  Let $\set{y_m}$ be an equidistributed sequence $\bmod\; 1$. Suppose
  $\psi$ is an approximating function such that $\sum_n\psi(n)$
  diverges. Let $R>0$ and let $F$ denote the set of pairs $(x,m)$ for
  which the inequality $\norm*{nx +y_m}<\psi(n)$ has at least $R$
  integer solutions $n\geq 1$. Then $\mu(F)=1$.
\end{theorem}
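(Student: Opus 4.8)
The plan is to run a second-moment (variance) argument on the "average over $m$" measure $\mu$, treating the equidistributed sequence $\set{y_m}$ as a quasi-random family of inhomogeneous shifts. For a finite $N$, let $A_N(x,m)$ be the set of pairs for which $\norm{nx+y_m}<\psi(n)$ holds for \emph{some} $n\le N$, equivalently $x\in\bigcup_{n\le N}E_n^{-y_m}(\psi)$ in the notation of \S\ref{sec:midscounterex}. First I would estimate $\int_0^1\bone_{A_N}(x,m)\,dx$, the measure of this union for a fixed $m$: by inclusion--exclusion down to second order, $\mu(\bigcup_{n\le N}E_n^{-y_m}(\psi))\ge \sum_{n\le N}2\psi(n)/n\cdot(\text{local count}) - \sum_{n\ne n'}(\text{pairwise overlaps})$. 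Because $\psi$ divergence gives $\sum_n 2\psi(n)\to\infty$ but the "useful" mass is $\sum_n 2\psi(n)\cdot(\text{number of fractions }a/n\text{ in }[0,1))$, the natural object is $\sum_{n\le N}2\psi(n)$ (each $E_n$ contributes $n$ intervals of radius $\psi(n)/n$), which diverges. The pairwise overlaps $|E_n^{-y_m}(\psi)\cap E_{n'}^{-y_m}(\psi)|$ are, for generic $m$, of size close to $(2\psi(n))(2\psi(n'))$ — the product of the measures — provided $y_m$ is "in general position" relative to the resonance $\norm{(n-n')y_m}$ being small; here equidistribution of $\set{y_m}$ is what guarantees that for density-one $m$ the shifts behave independently. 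This is exactly the same flavor of estimate used in the proof of the Doubly Metric Inhomogeneous Khintchine Theorem cited in the excerpt, and I would import that overlap bound, averaging it over $m$.

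Concretely, I would show that $\underline\mu(A_N)\to 1$ as $N\to\infty$ by a Chung--Erd\H{o}s / Paley--Zygmund argument applied to the $\mu$-average: letting $Z_N(x,m)=\#\set{n\le N:\norm{nx+y_m}<\psi(n)}$, one has $\mu(Z_N)=\sum_{n\le N}2\psi(n)\to\infty$ (first moment, using Fubini and equidistribution to evaluate $\frac1M\sum_{m\le M}\int_0^1\bone[\norm{nx+y_m}<\psi(n)]\,dx\to 2\psi(n)$), while the second moment $\mu(Z_N^2)=\sum_{n,n'\le N}\mu(\text{both hold})$ is, after the overlap analysis, at most $(1+o(1))\mu(Z_N)^2 + O(\mu(Z_N))$. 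Then Paley--Zygmund gives $\mu(Z_N>0)=\mu(A_N)\ge \mu(Z_N)^2/\mu(Z_N^2)\to 1$. Since $A_N$ increases to the set $F_1$ of pairs with \emph{at least one} solution, monotone convergence in the $\mu$-average yields $\mu(F_1)=1$.

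To upgrade "at least one solution" to "at least $R$ solutions," I would use a tail/truncation trick: split the divergent series $\sum_n\psi(n)$ into $R$ disjoint blocks $B_1,\dots,B_R$ each of which still diverges (e.g.\ consecutive dyadic ranges cyclically assigned), apply the "$\ge 1$ solution" result to each block's approximating function $\psi\cdot\bone_{B_\rho}$ to get a density-one set of $m$ (and full-measure set of $x$) with a solution $n\in B_\rho$, and intersect over $\rho=1,\dots,R$; since the blocks are disjoint the $R$ solutions are distinct. A cleaner alternative, which I would probably prefer for the write-up, is to observe that the Paley--Zygmund bound actually controls $\mu(Z_N\ge R)$ directly: $\mu(Z_N\ge R)\ge \mu(Z_N) - R$ divided by a second-moment factor going to $1$, so $\mu(Z_N\ge R)\to 1$ for every fixed $R$ once $\mu(Z_N)\to\infty$. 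Either way one concludes $\mu(F)=1$.

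The main obstacle is the overlap estimate made uniform enough in $m$. Controlling $|E_n^{-y_m}(\psi)\cap E_{n'}^{-y_m}(\psi)|$ requires understanding when $\norm{(n-n')y_m}$ is small — the resonant pairs — and showing that these contribute a negligible amount \emph{after averaging over $m\le M$}. Equidistribution of $\set{y_m}$ handles each fixed difference $d=n-n'$ (the proportion of $m\le M$ with $\norm{d\,y_m}<\eta$ is $\approx 2\eta$), but one needs this with enough uniformity in $d\le N$ to sum the error over all pairs; here I expect to need either a quantitative equidistribution hypothesis or a careful diagonal/off-diagonal split exploiting that the total "resonant mass" $\sum_{d}\psi$-weighted terms is dominated by $\mu(Z_N)$ rather than $\mu(Z_N)^2$. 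Packaging this so that the $o(1)$ in "$\mu(Z_N^2)\le(1+o(1))\mu(Z_N)^2+O(\mu(Z_N))$" is genuinely $o(1)$ as $N\to\infty$ — rather than depending on $M$ in a way that fights the $M\to\infty$ limit in $\mu$ — is the delicate point, and is where I would spend most of the effort.
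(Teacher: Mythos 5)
Your proposal follows essentially the same route as the paper: a Paley--Zygmund/second-moment argument applied to the counting function $Z_N$ under the averaged quantity $\mu$, with equidistribution used to evaluate the cross terms, and with the ``at least $R$ solutions'' conclusion obtained by placing the Paley--Zygmund threshold above $R$ (your ``cleaner alternative''; the block-splitting detour is unnecessary). The one substantive comment is that the obstacle you single out as the delicate point --- uniformity of the overlap estimates in the difference $d=n-n'$, and a possible need for quantitative equidistribution --- does not actually arise, because of the order in which the limits are taken. In the paper, $N$ is held fixed while the $M\to\infty$ limit (which is built into the definition of $\mu$) is evaluated: for each of the finitely many pairs $n\neq k\leq N$, the function
\begin{equation*}
  y \longmapsto \int_0^1 \bone_n(nx+y)\,\bone_k(kx+y)\,dx
\end{equation*}
is continuous and $1$-periodic, so plain (qualitative) equidistribution of $\set{y_m}$ gives that its Ces\`aro average over $m\leq M$ converges to the $y$-integral, which equals $4\psi(n)\psi(k)$ exactly --- this is Lemma~\ref{lem:indep}, and it is precisely the doubly metric independence you wanted to import, with no resonance analysis and no error term. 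Hence $\mu(Z_N^2)\leq \mu(Z_N)+\mu(Z_N)^2$ exactly, and only afterwards does $N\to\infty$ enter, through the ratio in the Paley--Zygmund bound; no uniformity in $d$ is ever needed. Two minor points: the first moment $\int_0^1\bone_n(nx+y_m)\,dx=2\psi(n)$ holds for every fixed $y_m$ by periodicity, so equidistribution is not needed there; and since $\mu$ is only a limit of averages rather than a measure, the passage from $A_N$ (or $F_N$) to $F$ should be phrased via the containment $F_N\subset F$ and monotonicity of $\underline\mu$, not monotone convergence.
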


\begin{remark*}
  Theorem~\ref{thm:doublymetric} should be compared with the Doubly
  Metric Inhomogeneous Khintchine Theorem, where it is shown under the
  same condition on $\psi$ that for almost every pair $(x,y)$ the
  inequality $\norm{nx + y}<\psi(n)$ has infinitely many integer
  solutions $n\geq 1$. Since we are taking the sequence $\set{y_m}$ to
  be equidistributed, we may naturally expect that for almost every
  fixed $x$, the $y_m$'s are a generic sampling from the corresponding
  full set of $y$'s. What we prove is similar but weaker. The proof
  follows~\cite[Page 121, Theorem II]{CasselsintrotoDA}.
\end{remark*}

The proof of Theorem~\ref{thm:doublymetric} is based on the following
analogue of the Paley--Zygmund Lemma.

\begin{lemma}\label{lem:pz}
  Suppose that for $f(x,m)\geq 0$, the quantities $\mu(f)$ and
  $\mu(f^2)$ exist and are finite. Suppose
  $\mu(f) \geq a\sqrt{\mu(f^2)}$ and $0\leq b\leq a$, and let
  \begin{equation*}
    A = \set*{(x,m): f(x,m)\geq b\sqrt{\mu(f^2)}}.
  \end{equation*}
  Then $\underline\mu(A)\geq (a-b)^2$.
\end{lemma}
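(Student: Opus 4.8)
The plan is to run the classical second-moment (Paley--Zygmund) argument, transcribed into the averaged functionals $\underline\mu,\overline\mu,\mu$ introduced above; no idea beyond careful bookkeeping is required. Throughout, recall that each $f(\cdot,m)$ is integrable, so the slices $\set{x:(x,m)\in A}$ are measurable and $\underline\mu(A),\overline\mu(A)$ make sense.

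First I would fix an integer $m\geq1$ and localise. On the set of $x$ with $(x,m)\notin A$ one has $f(x,m)<b\sqrt{\mu(f^2)}$ by the definition of $A$, hence
\[
  \int_0^1 f(x,m)\,dx \leq \int_0^1 f(x,m)\bone_A(x,m)\,dx + b\sqrt{\mu(f^2)}.
\]
To the remaining integral I apply the Cauchy--Schwarz inequality (using $\bone_A^2=\bone_A$) to obtain
\[
  \int_0^1 f(x,m)\bone_A(x,m)\,dx \leq \parens*{\int_0^1 f(x,m)^2\,dx}^{1/2}\parens*{\int_0^1 \bone_A(x,m)\,dx}^{1/2}.
\]
Combining the two displays, averaging over $m=1,\dots,M$, and applying Cauchy--Schwarz a second time to the sum over $m$ gives
\[
  \frac{1}{M}\sum_{m=1}^M\int_0^1 f(x,m)\,dx \leq \parens*{\frac{1}{M}\sum_{m=1}^M\int_0^1 f(x,m)^2\,dx}^{1/2}\parens*{\frac{1}{M}\sum_{m=1}^M\int_0^1 \bone_A(x,m)\,dx}^{1/2} + b\sqrt{\mu(f^2)}.
\]

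Next I would take $\liminf_{M\to\infty}$ of both sides. The left-hand side tends to $\underline\mu(f)=\mu(f)$ since $\mu(f)$ exists; on the right-hand side the first factor tends to $\sqrt{\mu(f^2)}$ since $\mu(f^2)$ exists, so the $\liminf$ of the product equals $\sqrt{\mu(f^2)}$ times the $\liminf$ of the second factor, and as $t\mapsto\sqrt t$ is continuous and increasing this is $\sqrt{\mu(f^2)}\,\sqrt{\underline\mu(A)}$. Thus
\[
  \mu(f)\leq \sqrt{\mu(f^2)}\parens*{\sqrt{\underline\mu(A)}+b}.
\]
Assuming $\mu(f^2)>0$, dividing through and using the hypothesis $\mu(f)\geq a\sqrt{\mu(f^2)}$ yields $\sqrt{\underline\mu(A)}\geq a-b$, which is non-negative by the assumption $b\leq a$, so squaring gives $\underline\mu(A)\geq(a-b)^2$. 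In the degenerate case $\mu(f^2)=0$ one gets $\mu(f)=0$ as well (again by Cauchy--Schwarz) and $A$ is all of the space, so the bound holds trivially.

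I do not anticipate a genuine obstacle: this is the textbook Paley--Zygmund estimate. The only points that deserve explicit comment are the measurability of the slices of $A$ (so that the statement even makes sense), and the two elementary limit manipulations on the right-hand side -- pulling a square root through a $\liminf$, and factoring a convergent sequence out of the $\liminf$ of a product. The hypothesis $0\leq b\leq a$ is used exactly once, to guarantee $a-b\geq0$ so that squaring preserves the inequality.
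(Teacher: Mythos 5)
Your proposal is correct and is essentially the paper's own argument: both are the standard Paley--Zygmund second-moment estimate, bounding $\int f\bone_A$ via Cauchy--Schwarz and lower-bounding it by $\mu(f)-b\sqrt{\mu(f^2)}$ using the definition of $A$, then passing to the $\liminf$. The only cosmetic differences are that you apply Cauchy--Schwarz twice (per slice in $x$, then over the average in $m$) where the paper applies it once to the joint average $\frac{1}{M}\sum_m\int_0^1$, and that you spell out the degenerate case $\mu(f^2)=0$, which the paper leaves implicit.
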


\begin{proof}[\textbf{Proof}]
  For every $M$ we have
  \begin{multline*}
    \parens*{\frac{1}{M}\sum_{m=1}^M\int_0^1 \bone_A(x,m)f(x,m)\,dx}^2
    \\ \leq \parens*{\frac{1}{M}\sum_{m=1}^M\int_0^1 \bone_A(x,m)\,dx}
    \parens*{\frac{1}{M}\sum_{m=1}^M\int_0^1 f(x,m)^2\,dx},
  \end{multline*}
  by the Cauchy--Schwarz Inequality, and therefore in the limit as
  $M\to\infty$ we will have
  $\underline\mu\parens*{\bone_A\cdot f}_1^2 \leq \underline\mu(A)\,
  \mu\parens*{f^2}$.
  Now, since on the complement of $A$ we have
  $f(x,m)\leq b\sqrt{\mu(f^2)}$, we therefore have for every fixed $M$
  \begin{multline*}
    \frac{1}{M}\sum_{m=1}^M\int_0^1\bone_A(x,m)f(x,m)\,dx \\
    = \frac{1}{M}\sum_{m=1}^M\int_0^1f(x,m)\,dx -
    \frac{1}{M}\sum_{m=1}^M\int_0^1\bone_{A^c}(x,m)f(x,m)\,dx \\
    \geq \frac{1}{M}\sum_{m=1}^M\int_0^1f(x,m)\,dx - b\sqrt{\mu(f^2)},
  \end{multline*}
  and in the limit as $M\to\infty$ we will find
  $\underline\mu\parens*{\bone_A\cdot f} \geq \mu(f) -
  b\sqrt{\mu\parens*{f^2}} \geq (a-b)\sqrt{\mu\parens*{f^2}}$.
  Combining, we have that $\underline\mu(A)\geq (a-b)^2$.
\end{proof}

Now, let $\set{y_m}$ and $\psi$ be as in the statement of
Theorem~\ref{thm:doublymetric}, and let $\Delta_N(x,m)$ denote the
number of integer solutions of $\norm{nx+y_m}<\psi(n)$ with
$0<n\leq N$. Then
\begin{equation*}
  \Delta_N(x,m) = \sum_{n=1}^N \bone_n(nx + y_m) \quad\textrm{where}\quad\bone_n := \bone_{\parens*{-\frac{\psi(n)}{n}, \frac{\psi(n)}{n}}}.
\end{equation*}
Notice that
\begin{equation*}
  \mu\parens*{\Delta_N} = \sum_{n=1}^N 2\psi(n),
\end{equation*}
so our divergence assumption can be stated as
$\mu\parens*{\Delta_N}\to\infty$ as $N\to\infty$.

\begin{lemma}\label{lem:indep}
  For $n\neq k$, we will have
  \begin{equation*}
    \lim_{M\to\infty}\frac{1}{M}\sum_{m=1}^M\int_0^1\bone_n(nx+y_m)\bone_k(kx+y_m)\,dx
    = 4\psi(n)\psi(k).
  \end{equation*}
\end{lemma}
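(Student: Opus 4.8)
The plan is to compute the average
$\frac{1}{M}\sum_{m=1}^M\int_0^1\bone_n(nx+y_m)\bone_k(kx+y_m)\,dx$
by first performing the $x$-integral for each fixed $m$, and then taking the Ces\`aro average over $m$ using equidistribution of $\set{y_m}$. For the inner integral, I would fix $m$ and write $g_m(x) := \bone_n(nx+y_m)\bone_k(kx+y_m)$, a function on $[0,1)$. Since $\bone_n$ is the indicator of an interval of length $2\psi(n)/n$ centred at the origin mod $1/n$, the set where $\bone_n(nx+y_m)=1$ is a union of $n$ equally spaced intervals of total length $2\psi(n)$; similarly for $k$. The key point is that for $n\neq k$ the function $x\mapsto(\bone_n(nx+y_m),\bone_k(kx+y_m))$ distributes the two indicators ``independently'' in an averaged sense. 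Concretely, I would show that $\int_0^1 g_m(x)\,dx$ equals the length of the overlap of these two periodic sets, which — because $\gcd$-type cancellation makes the two arithmetic progressions of intervals interleave — is close to $4\psi(n)\psi(k)$, with an error depending on $n,k$ but not on $m$. Actually the cleanest route avoids any error term: change variables and use that the map $x \mapsto (nx \bmod 1, kx \bmod 1)$, while not equidistributing on the 2-torus for fixed $n,k$, does have the property that for the specific product-of-intervals test set the measure is exactly $(2\psi(n))(2\psi(k))$ whenever $\gcd(n,k)$ divides appropriately — but since this is delicate, I would instead push the $m$-average inside.

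The slicker approach, which I would actually carry out, is to integrate in $x$ \emph{last}. Write
\begin{equation*}
  \frac{1}{M}\sum_{m=1}^M\int_0^1\bone_n(nx+y_m)\bone_k(kx+y_m)\,dx
  = \int_0^1 \parens*{\frac{1}{M}\sum_{m=1}^M \bone_n(nx+y_m)\bone_k(kx+y_m)}dx.
\end{equation*}
For fixed $x$, the function $y\mapsto \bone_n(nx+y)\bone_k(kx+y)$ is the indicator of $y$ lying in a fixed measurable subset $S_x\subset[0,1)$ (the intersection of two translated unions of intervals), and by equidistribution of $\set{y_m}$ mod $1$ we have $\frac{1}{M}\sum_{m=1}^M \bone_{S_x}(y_m)\to \abs{S_x}$. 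Then $\abs{S_x}$ is the measure of $\set{y : nx+y\in(-\tfrac{\psi(n)}{n},\tfrac{\psi(n)}{n})+\tfrac{\ZZ}{n}} \cap \set{y : kx+y\in(-\tfrac{\psi(k)}{k},\tfrac{\psi(k)}{k})+\tfrac{\ZZ}{k}}$; the first set is a union of $n$ intervals of total length $2\psi(n)$, the second a union of $k$ intervals of total length $2\psi(k)$, and since $n\neq k$ a direct computation (or Fubini on the torus) gives $\abs{S_x} = 4\psi(n)\psi(k)$ for \emph{every} $x$ — the $n$ intervals and the $k$ intervals, both living in the single variable $y$ but with different periods and $x$-dependent offsets, overlap in total length exactly the product of the densities. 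Integrating the constant $4\psi(n)\psi(k)$ over $x\in[0,1)$ and invoking bounded convergence to pass the limit through the $x$-integral yields the claim.

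The main obstacle is the interchange of the limit $M\to\infty$ with the integral $\int_0^1(\cdot)\,dx$: pointwise convergence $\frac{1}{M}\sum_m \bone_{S_x}(y_m)\to\abs{S_x}$ from equidistribution must be upgraded to something that licenses integration. This is handled by the Dominated Convergence Theorem since $0\le \frac{1}{M}\sum_m \bone_n(nx+y_m)\bone_k(kx+y_m)\le 1$ uniformly; the only subtlety is that equidistribution gives convergence of $\frac{1}{M}\sum_m \bone_{S_x}(y_m)$ to $\abs{S_x}$ only for $S_x$ a continuity set (finite union of intervals, hence boundary of measure zero), which holds here for all but possibly countably many $x$ (those for which an endpoint of one interval family coincides with an endpoint of the other), a null set that does not affect the integral. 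A secondary, purely computational point is verifying $\abs{S_x}=4\psi(n)\psi(k)$ exactly; I would do this by noting $\abs{S_x} = \int_0^1 \bone_{A_n}(nx+y)\bone_{A_k}(kx+y)\,dy$ where $A_j = \bigcup_{\ell}(\tfrac{\ell}{j}-\tfrac{\psi(j)}{j},\tfrac{\ell}{j}+\tfrac{\psi(j)}{j})$ is $\tfrac{1}{j}$-periodic with density $2\psi(j)$, so $\bone_{A_n}(nx+\cdot)$ is $\tfrac1n$-periodic and $\bone_{A_k}(kx+\cdot)$ is $\tfrac1k$-periodic; the product of a $p$-periodic and a $q$-periodic $0$-$1$ function on a common period has integral equal to the product of their densities precisely when the two period lattices are ``in general position,'' which for distinct integers $n,k$ and generic offsets is automatic, and the $x$-integration averages out the non-generic offsets — but in fact one checks it is the product of densities for \emph{every} offset because $\tfrac1n\ZZ+\tfrac1k\ZZ = \tfrac1{\operatorname{lcm}(n,k)}\ZZ$ refines both. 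This completes the proof.
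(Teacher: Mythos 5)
Your outer strategy is viable and is essentially the paper's argument with the roles of $x$ and $y$ exchanged: swap the finite average over $m$ with $\int_0^1 dx$, use equidistribution of $\set{y_m}$ for each fixed $x$ (the set $S_x$ is a finite union of intervals, so its indicator is Riemann integrable and Weyl's criterion applies), and pass to the limit under the $x$-integral by bounded convergence. The paper instead notes that $y\mapsto\int_0^1\bone_n(nx+y)\bone_k(kx+y)\,dx$ is continuous and $1$-periodic and applies Weyl's theorem directly, which avoids any discussion of continuity sets; either route must then evaluate the double integral $\int_0^1\int_0^1\bone_n(nx+y)\bone_k(kx+y)\,dx\,dy$.

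The genuine error is the claim that $\abs{S_x}=4\psi(n)\psi(k)$ for \emph{every} $x$. This is false, and it is precisely the point where the hypothesis $n\neq k$ has to do real work. With the normalization forced by $\mu(\Delta_N)=\sum_{n\leq N}2\psi(n)$ (that is, $\bone_n(t)=1$ iff $\norm{t}<\psi(n)$), the set $S_x$ is the intersection of one arc of length $2\psi(n)$ centred at $-nx$ with one arc of length $2\psi(k)$ centred at $-kx$: at $x=0$ its measure is $2\min\set{\psi(n),\psi(k)}$, far larger than $4\psi(n)\psi(k)$ when the $\psi$'s are small, while for $x$ with $\norm{(n-k)x}>\psi(n)+\psi(k)$ it is $0$. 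Even under your reading of $\bone_n$ as a $\tfrac1n$-periodized interval the assertion fails: the overlap of a $\tfrac1n$-periodic and a $\tfrac1k$-periodic union of short intervals depends on the relative offset and can vanish, and the group identity $\tfrac1n\ZZ+\tfrac1k\ZZ=\tfrac1{\operatorname{lcm}(n,k)}\ZZ$ gives nothing in this direction; the same objection undercuts your first paragraph's suggestion that the $x$-integral for a single fixed $m$ is already close to $4\psi(n)\psi(k)$. What is true, and what your argument actually needs, is that the \emph{average} over $x$ is the product: substituting $u=nx+y$ gives $\abs{S_x}=\int_0^1\bone_n(u)\bone_k\parens{u+(k-n)x}\,du$, and since $n\neq k$ the offset $(k-n)x$ runs uniformly over the circle as $x$ runs over $[0,1)$, so $\int_0^1\abs{S_x}\,dx=2\psi(n)\cdot 2\psi(k)$ (equivalently, $(x,y)\mapsto(nx+y,kx+y)$ preserves Lebesgue measure on the torus because its determinant $n-k$ is nonzero). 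Replacing the false pointwise identity by this computation of $\int_0^1\abs{S_x}\,dx$ repairs your proof.
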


\begin{proof}[\textbf{Proof}]
  The function
  \begin{equation*}
    \int_0^1\bone_n(nx+y)\bone_k(kx+y)\,dx
  \end{equation*}
  is continuous in the real variable $y$. Therefore, since $\set{y_m}$
  is equidistributed, we have that
  \begin{equation*}
    \lim_{M\to\infty}\frac{1}{M}\sum_{m=1}^M\int_0^1\bone_n(nx+y_m)\bone_k(kx+y_m)\,dx
    = \int_0^1\int_0^1\bone_n(nx+y)\bone_k(kx+y)\,dx\,dy,
  \end{equation*}
  which is equal to $4\psi(n)\psi(k)$.
\end{proof}

\begin{corollary}\label{cor:asdf}
  For any $\eps>0$, we will have
  $\mu\parens*{\Delta_N} \geq
  (1-\eps)\sqrt{\mu\parens*{\Delta_N^2}}$
  for all sufficiently large $N$.
\end{corollary}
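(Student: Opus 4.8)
The plan is to estimate $\mu(\Delta_N^2)$ from above by $\mu(\Delta_N)^2 + o(\mu(\Delta_N)^2)$ and conclude. First I would expand the square:
\begin{equation*}
  \Delta_N(x,m)^2 = \sum_{n=1}^N \bone_n(nx+y_m) + \sum_{\substack{n,k=1\\ n\neq k}}^N \bone_n(nx+y_m)\bone_k(kx+y_m),
\end{equation*}
using that $\bone_n$ is an indicator so $\bone_n^2 = \bone_n$. Applying the averaging functional $\mu$ term by term, the diagonal contributes $\mu(\Delta_N) = \sum_{n=1}^N 2\psi(n)$, while Lemma~\ref{lem:indep} shows that each off-diagonal term contributes exactly $4\psi(n)\psi(k)$ in the limit. (One must check that $\mu$ of the finite sum equals the sum of the $\mu$'s of the summands; this is immediate here since $\Delta_N^2$ is a \emph{finite} sum of bounded functions, so the limsup defining $\overline\mu$ is actually a limit and is additive.) Hence
\begin{equation*}
  \mu(\Delta_N^2) = \mu(\Delta_N) + \sum_{\substack{n,k=1\\ n\neq k}}^N 4\psi(n)\psi(k) \leq \mu(\Delta_N) + \parens*{\sum_{n=1}^N 2\psi(n)}^2 = \mu(\Delta_N) + \mu(\Delta_N)^2.
\end{equation*}

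Next I would feed this into the desired inequality. We want $\mu(\Delta_N)^2 \geq (1-\eps)^2 \mu(\Delta_N^2)$, i.e.
\begin{equation*}
  \mu(\Delta_N)^2 \geq (1-\eps)^2\parens*{\mu(\Delta_N) + \mu(\Delta_N)^2},
\end{equation*}
which after dividing by $\mu(\Delta_N)^2$ (positive once $N$ is large enough that some $\psi(n)>0$) becomes
\begin{equation*}
  1 \geq (1-\eps)^2\parens*{\frac{1}{\mu(\Delta_N)} + 1}.
\end{equation*}
Since $\mu(\Delta_N) = \sum_{n=1}^N 2\psi(n)\to\infty$ by the divergence hypothesis, the right-hand side tends to $(1-\eps)^2 < 1$, so the inequality holds for all sufficiently large $N$. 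This gives $\mu(\Delta_N)\geq (1-\eps)\sqrt{\mu(\Delta_N^2)}$, as claimed.

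The only genuinely delicate point is bookkeeping about whether $\mu(\Delta_N^2)$ \emph{exists} (as opposed to just $\overline\mu$): this is needed so that the statement matches the hypotheses of Lemma~\ref{lem:pz} later. But because $\Delta_N^2$ is a finite sum and each summand has a genuine limiting average (the diagonal terms by direct computation, the cross terms by Lemma~\ref{lem:indep}), the limit $\mu(\Delta_N^2)$ exists and equals the expression above; no limsup/liminf subtlety survives for fixed $N$. So I do not expect any real obstacle here — the corollary is essentially a second-moment computation combined with the divergence of $\sum\psi(n)$.
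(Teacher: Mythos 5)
Your proposal is correct and follows essentially the same route as the paper: expand $\Delta_N^2$ into diagonal and off-diagonal terms, evaluate the cross terms via Lemma~\ref{lem:indep}, bound their sum by $\mu\parens*{\Delta_N}^2$, and conclude from $\mu\parens*{\Delta_N}\to\infty$. Your extra remark on the existence of $\mu\parens*{\Delta_N^2}$ (additivity of the limiting average over the finite sum) is a point the paper leaves implicit, but it is not a different argument.
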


\begin{proof}[\textbf{Proof}]
  We calculate
  \begin{align*}
    \mu\parens*{\Delta_N^2}&= \lim_{M\to\infty}\frac{1}{M}\sum_{m=1}^M
                             \int_0^1\Delta_N(x,m)^2\,dx \\
                           &= \lim_{M\to\infty}\frac{1}{M}\sum_{m=1}^M
                             \int_0^1\sum_{n,k\leq N}\bone_n(nx + y_m)\bone_k(kx + y_m)\,dx\\
                           &= \lim_{M\to\infty}\frac{1}{M}\sum_{m=1}^M
                             \int_0^1\parens*{\sum_{n\leq N}\bone_n(nx + y_m)^2 +
                             \sum_{\substack{n, k\leq N \\ n\neq k}}\bone_n(nx +
    y_m)\bone_k(kx + y_m)}\,dx \\
                           &\overset{Lem.~\ref{lem:indep}}{=} \mu\parens*{\Delta_N} +
                             \sum_{\substack{n,k\leq N \\ n\neq k}}4\psi(n)\psi(k)\\
                           &\leq \mu\parens*{\Delta_N} + \mu\parens*{\Delta_N}^2 \\
                           &\leq (1-\eps)^{-2} \mu\parens*{\Delta_N}^2
  \end{align*}
  for $N$ sufficiently large, since $\mu\parens*{\Delta_N}\to\infty$
  as $N\to\infty$.
\end{proof}

\begin{proof}[\textbf{Proof of Theorem~\ref{thm:doublymetric}}]
  For an arbitrary small $\eps>0$, let $a=1-\eps$ and
  $b=\eps$. Corollary~\ref{cor:asdf} and our divergence assumption
  tell us that for $N$ sufficiently large we have
  \begin{equation*}
    \mu\parens*{\Delta_N} \geq (1-\eps)\sqrt{\mu\parens*{\Delta_N^2}} \quad\textrm{and}\quad  \eps\mu\parens*{\Delta_N} \geq R.
  \end{equation*}
  For these $N$, Lemma~\ref{lem:pz} implies that we will have
  $\Delta_N(x,m)\geq \eps\mu\parens*{\Delta_N} \geq R$ on a set $F_N$
  with $\underline \mu(F_N)\geq (1-2\eps)^2$. Notice that
  $F_N\subset F$. Since $\eps>0$ was arbitrary, this shows $\mu(F)=1$.
\end{proof}

Before stating the proof of Theorem~\ref{thm:arbitrary}, we prove the
following simple lemma.

\begin{lemma}\label{lem:simple}
  If $0\leq a_m\leq 1$ and
  $\lim_{M\to\infty}\frac{1}{M}\sum_{m=1}^M a_m=1$, then for every
  $\eps>0$, the set of integers $m\geq 1$ for which $a_m \geq 1-\eps$
  has asymptotic density $1$.
\end{lemma}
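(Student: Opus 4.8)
The plan is to argue by contraposition: if the density of good indices were not $1$, then a positive-density set of indices would contribute a bounded-away-from-$1$ value to each $a_m$, and this would drag the Cesàro average below $1$, contradicting the hypothesis. First I would fix $\eps>0$ and let $B_\eps = \set{m\geq 1 : a_m < 1-\eps}$ be the "bad" set, with $B_\eps(M) = \abs{B_\eps\cap\set{1,\dots,M}}$ counting its elements up to $M$. The upper density of $B_\eps$ is $\overline d(B_\eps) = \limsup_{M\to\infty} B_\eps(M)/M$, and I want to show this is $0$; since this holds for every $\eps>0$, the complement—the set of $m$ with $a_m\geq 1-\eps$—will then have lower density $1$, hence density $1$.

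Next I would estimate the average from above. Using $0\leq a_m\leq 1$ for $m\notin B_\eps$ and $a_m < 1-\eps$ for $m\in B_\eps$, we get for every $M$
\begin{equation*}
  \frac{1}{M}\sum_{m=1}^M a_m \leq \frac{1}{M}\parens*{\parens*{M - B_\eps(M)}\cdot 1 + B_\eps(M)\cdot(1-\eps)} = 1 - \eps\,\frac{B_\eps(M)}{M}.
\end{equation*}
Taking $\limsup_{M\to\infty}$ of both sides and using that the left-hand side has limit $1$ by hypothesis, we obtain $1 \leq 1 - \eps\,\overline d(B_\eps)$, forcing $\overline d(B_\eps) = 0$. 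Hence the set $\set{m : a_m\geq 1-\eps}$ has lower density $\geq 1$, and since densities are bounded by $1$ it has asymptotic density exactly $1$, as claimed.

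There is no real obstacle here; the only point requiring minor care is the interchange of $\limsup$ with the already-convergent left side, which is routine since $\lim\frac1M\sum a_m = 1$ means the $\limsup$ and $\liminf$ of that quantity both equal $1$. One could equivalently phrase the whole argument with $\liminf$ on the right, but the inequality above makes the one-line conclusion cleanest.
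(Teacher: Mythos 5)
Your overall argument is correct and is essentially the paper's own proof in mirror image: both bound the Ces\`aro average by splitting the indices into those with $a_m\geq 1-\eps$ and those with $a_m<1-\eps$; the paper phrases this as a contradiction along a subsequence realizing the lower density of the good set, while you work with the upper density of the bad set $B_\eps$. However, one step is misstated as written. From
\begin{equation*}
  \frac{1}{M}\sum_{m=1}^M a_m \leq 1 - \eps\,\frac{B_\eps(M)}{M},
\end{equation*}
taking $\limsup_{M\to\infty}$ of both sides gives $1 \leq 1-\eps\liminf_{M} B_\eps(M)/M$, because $\limsup_{M}\bigl(1-\eps B_\eps(M)/M\bigr) = 1-\eps\liminf_{M} B_\eps(M)/M$, \emph{not} $1-\eps\,\overline d(B_\eps)$. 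As written, this only shows that the lower density of $B_\eps$ is $0$, which does not yield the claim. The repair is immediate: since the left-hand side converges to $1$, take $\liminf$ of both sides instead (or pass to a subsequence $M_j$ along which $B_\eps(M_j)/M_j\to\overline d(B_\eps)$, which is exactly the device the paper uses with the good set); then $1\leq 1-\eps\,\overline d(B_\eps)$, forcing $\overline d(B_\eps)=0$ and hence density $1$ for the good set. Your closing remark about rephrasing with $\liminf$ shows you sensed the issue, but the step as stated should be corrected.
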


\begin{proof}[\textbf{Proof}]
  Suppose $0\leq a_m\leq 1$ and that there is some $\eps>0$ for which
  the set of integers $m\geq 1$ with $a_m \geq 1-\eps$ has lower
  asymptotic density $A<1$.  There is some sequence $M_j\to\infty$ of
  integers ``realizing'' this lower density, and so
  \begin{equation*}
    \frac{1}{M_j}\sum_{m=1}^{M_j}a_m < A + (1-A)(1-\eps) +
    o(1)\quad\textrm{as}\quad j\to\infty.
  \end{equation*}
  But $A + (1-A)(1-\eps) = 1 - \eps(1-A) < 1$, so
  $\lim_{M\to\infty}\frac{1}{M}\sum_{m=1}^M a_m\neq 1$.
\end{proof}

\begin{proof}[\textbf{Proof of Theorem~\ref{thm:arbitrary}}]
  Theorem~\ref{thm:doublymetric} tells us that $\mu(F)=1$, that is,
  \begin{equation*}
    \lim_{m\to\infty}\frac{1}{M}\sum_{m=1}^M\int_0^1\bone_F(m,x)\,dx = 1.
  \end{equation*}
  The theorem follows by applying Lemma~\ref{lem:simple} with
  $a_m = \int_0^1\bone_F(x,m)\,dx$.
\end{proof}

\section{Discussion of inhomogeneous versions of the Duffin--Schaeffer
  Conjecture}
\label{sec:inhom-duff-scha}

For the approximating functions $\psi$ coming from
Theorems~\ref{thm:sequencecounterex}/\ref{thm:moreover}
and~\ref{thm:cantorex}, notice that
\begin{align*}
  \sum_n \frac{\varphi(n)\psi(n)}{n} &= \sum_j \sum_{m=1}^{M_j}
                                       \frac{\varphi(k_m)
                                       k_m}{k_mK_j}2^{-j} \\
                                     &\leq \sum_j \frac{1}{2^jK_j}\sum_{d\mid K_j}\varphi(d) \\
                                     &= \sum_j 2^{-j},
\end{align*}
which converges. It therefore makes sense to formulate inhomogeneous
versions of the Duffin--Schaeffer Conjecture that take this into
account.

In~\S\ref{sec:inhomothy} we have stated the direct translation of the
Duffin--Schaeffer Conjecture to the inhomogeneous setting, the
\emph{Inhomogeneous Duffin--Schaeffer Conjecture}
(\textsc{idsc}). Obviously, it would be ideal to prove the
\textsc{idsc}, but since it is \emph{a priori} stronger than the
original conjecture, it may make more sense to aim for more modest
goals first. Can we prove the \textsc{idsc} for a particular
inhomogeneous parameter $y$?  Can we prove it for a family of
inhomogeneous parameters, perhaps badly approximable $y$'s? Can we
prove it for \emph{almost every} $y$? Or prove that it is a
``zero-one'' situation with respect to inhomogeneous parameters? In
the spirit of Question~\ref{q:hominhom}, does the $\textsc{dsc}$ imply
the $\textsc{idsc}$? Any of these would be nice.

In~\S\ref{sec:inhomothy} we remarked on the temptation to remove
monotonicity from the Inhomogeneous Khintchine Theorem by allowing the
inhomogeneous part to vary among a (say, equidistributed)
sequence. Theorem~\ref{thm:sequencecounterex} shows that this is
impossible, but Theorem~\ref{thm:arbitrary} shows that taking an
equidistributed sequence still results in a best-case scenario where
we can say \emph{something}. This leads us to the following question.

\begin{question}[Countably inhomogeneous
  Duffin--Schaeffer Conjecture]\label{q:cidsc}
  Let $\set{y_i}$ be some sequence of real numbers, and suppose $\psi$
  is an approximating function such that the sum
  $\sum_n \varphi(n)\psi(n)/n$ diverges. Does this imply that for
  almost every real number $x$ there exists an integer $m\geq 1$ such
  that infinitely many coprime integer pairs $(a,n)$ satisfy the
  inequality ${\abs{nx-a+y_m}<\psi(n)}$?
\end{question}

Since we are inclined to believe that the Inhomogeneous
Duffin--Schaeffer Conjecture is true, we must therefore also expect an
affirmative answer to Question~\ref{q:cidsc}, regardless of the given
sequence $\set{y_i}$. But in order to attack Question~\ref{q:cidsc}
directly, it makes more sense to restrict our attention to certain
kinds of sequences. In view of Theorem~\ref{thm:arbitrary}, the most
natural ones to consider are equidistributed.

In fact, one could modify Question~\ref{q:cidsc} in a number of
ways. Instead of a ``countably inhomogeneous'' version of the
Duffin--Schaeffer Conjecture, one may also seek such a version of the
Duffin--Schaeffer \emph{Theorem}, where we make the additional
assumption that
\begin{equation*}
  \limsup_{N\to\infty}\parens*{\sum_{n=1}^N
    \frac{\varphi(n)\psi(n)}{n}}\parens*{\sum_{n=1}^N\psi(n)}^{-1}>0. 
\end{equation*}
In any case, it would be helpful to have zero-one laws analogous to
Gallagher's. We provide some in~\S\ref{sec:inhom-zero-one}.

\section{Proof of Theorem~\ref{thm:E01}: Inhomogeneous zero-one laws}
\label{sec:inhom-zero-one}

The aim in this section is to prove the following restatement of
Theorem~\ref{thm:E01}.

\begin{theorem}[Theorem~\ref{thm:E01}]\label{thm:EbarE}
  Let $y$ be a real number. Let $\E^y$ denote the set of real numbers
  $x$ for which
  \begin{equation*}
    \abs{nx - a + y}<\psi(n), \quad (a,n)=1, 
  \end{equation*}
  for infinitely many integers $a,n$. Let $\E = \bigcup_{m}\E^{my}$
  and $\bar \E = \limsup_{m\to\infty}\E^{my}$. Then
  $\abs{\E} \in\set{0,1}$ and $\abs{\bar \E}\in\set{0,1}$.
\end{theorem}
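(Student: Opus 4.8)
The plan is to adapt Gallagher's original argument for his Zero-One Law to this inhomogeneous setting. The key structural feature that makes Gallagher's proof work is that the set $W^0$ (the $y=0$ case of $W^y$) is invariant under the ergodic action of multiplication by primes on $\RR/\ZZ$ — more precisely, Gallagher exploits a quasi-invariance of the relevant $\limsup$ set under the maps $x \mapsto px$ for primes $p$, using that coprimality conditions interact nicely with such dilations. So the first step is to recall the precise mechanism of Gallagher's proof and isolate exactly which properties of $W^0$ are used; this is essentially a bookkeeping exercise since the paper explicitly says ``Our proof follows his.''

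For the first assertion, $\abs{W} \in \{0,1\}$ where $W = \bigcup_m W^{my}$: I would observe that $W$ is, by construction, a countable union of the sets $W^{my}$. The plan is to show that each dilation $x \mapsto qx$ (for a suitable positive integer $q$, say a prime) maps $W^{my}$ into $W^{m'y}$ for an appropriate $m'$ — indeed the inequality $\abs{nx - a + my} < \psi(n)$ transforms under $x \mapsto qx$ into a statement about $\abs{n(qx) - qa + qmy}$, which after adjusting $n$ (replacing $n$ by $n$ and folding the factor of $q$ appropriately, taking care of the coprimality condition $(a,n)=1$ exactly as Gallagher does with the prime dilations) becomes membership in some $W^{(m')y}$ with $m' = qm$ or $m' = m$ depending on how the factor is absorbed. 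The upshot is that $W$ itself is invariant (up to null sets) under a family of dilations rich enough to generate an ergodic action on $\RR/\ZZ$ — this is exactly Gallagher's ergodicity input. Then $\abs{W} \in \{0,1\}$ follows from ergodicity. The subtlety to get right is that we need the dilations to preserve the \emph{union} $W = \bigcup_m W^{my}$, not each piece individually, which is actually easier than Gallagher's situation since we have the extra freedom of the index $m$ to vary.

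For the second assertion, $\abs{\bar W} \in \{0,1\}$ where $\bar W = \limsup_{m\to\infty} W^{my}$: the same dilation $x \mapsto qx$ should send $W^{my}$ into $W^{qmy}$ (or a comparable set with index tending to infinity as $m \to \infty$), so that it maps the tail $\bigcup_{m \geq M} W^{my}$ into another such tail, hence preserves $\bar \W = \bigcap_M \bigcup_{m\geq M} W^{my}$ up to null sets. Again ergodicity of the dilation action on $\RR/\ZZ$ forces $\abs{\bar W} \in \{0,1\}$. One must check that the index shift $m \mapsto qm$ (or whatever it turns out to be) genuinely goes to infinity, so that tails map into tails and not out of $\bar W$; this is automatic for $q \geq 2$.

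The main obstacle I anticipate is handling the coprimality constraint $(a,n)=1$ under dilation. In Gallagher's homogeneous proof this is the delicate point: when one replaces $x$ by $px$ for a prime $p$, the pair $(a,n)$ must be massaged into a new coprime pair, and one loses or gains solutions along divisibility classes in a controlled way; Gallagher circumvents this by working with $\limsup$ sets so that losing finitely many solutions is harmless, and by choosing the prime $p$ not dividing relevant quantities. The inhomogeneous twist $+my$ does not obstruct this — it simply rides along as $+qmy$ under $x \mapsto qx$ — so the coprimality bookkeeping is essentially unchanged from Gallagher's, and the proof should go through mutatis mutandis. I would therefore structure the write-up as: (i) state the dilation lemma ($x \mapsto qx$ carries $W^{my}$ into $W^{qmy}$ up to a null set, for $q$ coprime to suitable data), proved exactly as in \cite{Gallagher01}; (ii) deduce invariance of $W$ and of $\bar W$ under enough dilations; (iii) invoke ergodicity of the multiplicative action on $\RR/\ZZ$ to conclude.
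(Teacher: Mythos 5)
Your step (i) --- the ``dilation lemma'' asserting that $x\mapsto qx$ carries $\E^{my}$ into $\E^{qmy}$ up to a null set --- is precisely the point where this argument breaks down, and it is the obstacle that Gallagher's proof (and the paper's adaptation of it) is structured to circumvent, not a statement one can quote from~\cite{Gallagher01}. Two things go wrong. First, under $x\mapsto px$ the inequality $\abs{nx-a+my}<\psi(n)$ becomes $\abs{n(px)-pa+pmy}<p\,\psi(n)$: the approximating function is inflated by a factor of $p$, so you do not land in $\E^{pmy}$ but in the corresponding set for $p\psi$. The paper deals with this by first reducing to $\psi(n)=o(n)$, introducing the enlarged sets $A(p^{\nu},m)$, $B(p^{\nu},m)$, $C(p^{\nu},m)$ defined with $p^{\nu-1}\psi$ in place of $\psi$, and using Cassels' lemma (and Borel--Cantelli for the $\bar\E$ version) to show the enlargement does not change measures; none of this appears in your sketch. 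Second, and more seriously, the coprimality issue is not a matter of ``losing finitely many solutions'' or of ``choosing the prime $p$ not dividing relevant quantities'': the new pair $(pa,n)$ is coprime only when $p\nmid n$, and for any fixed prime $p$ it may happen that every $n$ in the support of $\psi$ is divisible by $p^{2}$, so the losses occur along entire divisibility classes and no single choice of prime avoids them.

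This is why the proof splits $\E^{my}=A(p,m)\cup B(p,m)\cup C(p,m)$ according to $p\nmid n$, $p\parallel n$, $p^{2}\mid n$. Multiplication by $p$ handles the $A$-part, the affine ergodic map $x\mapsto px+\tfrac{1}{p}$ handles the $B$-part, but for the $C$-part no ergodic transformation is available at all: one only obtains that $C(p)$ is periodic with period $1/p$ (via $x\mapsto x\pm\tfrac{1}{p}$), and in the remaining case ($\abs{A(p)}=\abs{B(p)}=0$ for all $p$) the conclusion $\abs{\E}\in\set{0,1}$ comes from a Lebesgue density-point argument letting $p\to\infty$ over all primes --- not from ergodicity of a dilation action. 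Consequently your steps (ii) and (iii), which require (quasi-)invariance of $\E$ and $\bar\E$ under an ergodic family of dilations, are never available, and the plan as written cannot be completed. The one genuinely correct new observation in your sketch --- that the inhomogeneous shift simply rides along as $my\mapsto pmy$, which is exactly why one must work with the union over $m$ (respectively the limsup over $m$) rather than with a single fixed $m$ --- is indeed the key inhomogeneous ingredient in the paper's proof, but it supplements Gallagher's decomposition rather than replacing it.
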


\begin{remark*}
  $\E$ is the set of real numbers $x$ for which there exists an
  integer $m$ such that $\abs{nx - a + my}<\psi(n)$ has infinitely
  many solutions $(a,n)=1$, and $\bar \E$ is the set of real numbers
  $x$ for which there exist \emph{infinitely many} such integers
  $m\geq 1$.
\end{remark*}

The proof of Theorem~\ref{thm:EbarE} follows
Gallagher~\cite{Gallagher01}, which in turn relies partly on the
following lemma.

\begin{lemma}[Cassels,~\cite{Casselslemma}]\label{lem:cassels}
  Let $\set{I_k}$ be a sequence of intervals and let $\set{U_k}$ be a
  sequence of measurable sets such that for some positive $\eps <1$,
  \begin{equation*}
    U_k\subset I_k, \quad \abs*{U_k} \geq \eps\abs*{I_k}, \quad
    \abs*{I_k}\to 0.
  \end{equation*}
  Then
  $\abs*{\limsup_{k\to\infty}I_k} = \abs*{\limsup_{k\to\infty}U_k}$.
\end{lemma}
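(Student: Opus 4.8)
The plan is to prove the two inclusions of measures separately. One direction is immediate: since $U_k\subset I_k$ for every $k$, we have $\limsup_{k\to\infty}U_k\subset\limsup_{k\to\infty}I_k$, hence $\abs{\limsup_{k\to\infty}U_k}\le\abs{\limsup_{k\to\infty}I_k}$. All the content is in the reverse inequality, which I would reformulate as the assertion that the set $A:=\limsup_{k\to\infty}I_k\setminus\limsup_{k\to\infty}U_k$ is null.

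The engine is the Lebesgue Density Theorem. First I would slice $A$ according to how soon a point permanently leaves the sets $U_k$: for each integer $N\ge1$ put $A_N:=\set{x\in A : x\notin U_k \text{ for all } k\ge N}$. Every $x\in A$ lies in only finitely many $U_k$, so $A=\bigcup_{N\ge1}A_N$, and it suffices to show $\abs{A_N}=0$ for each fixed $N$. Note that by construction $A_N\cap U_k=\emptyset$ whenever $k\ge N$. Suppose, for contradiction, that $\abs{A_N}>0$; then $A_N$ has a Lebesgue density point $x$. Since $x\in A\subset\limsup_{k\to\infty}I_k$, infinitely many $I_k$ contain $x$, and since $\abs{I_k}\to0$, among these I may choose one with $k\ge N$ and $\abs{I_k}$ as small as desired. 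The key estimate is that for such $k$ one has $\abs{A_N\cap I_k}\ge(1-\eps/2)\abs{I_k}$. Indeed, because $x\in I_k$ and $\abs{I_k}$ is the length of $I_k$, we have $I_k\subset J_k:=[x-\abs{I_k},\,x+\abs{I_k}]$, an interval of length $2\abs{I_k}$ centred at $x$; therefore
\[
  \abs{I_k\setminus A_N}\le\abs{J_k\setminus A_N}=2\abs{I_k}\cdot\frac{\abs{J_k\setminus A_N}}{\abs{J_k}},
\]
and the last fraction tends to $0$ as $\abs{I_k}\to0$ since $x$ is a density point of $A_N$. Choosing $\abs{I_k}$ small enough makes $\abs{I_k\setminus A_N}\le(\eps/2)\abs{I_k}$, which is the claimed bound.

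Now I would combine this with the two hypotheses $U_k\subset I_k$ and $\abs{U_k}\ge\eps\abs{I_k}$. Since $A_N\cap I_k$ and $U_k$ are both subsets of $I_k$, inclusion–exclusion gives
\[
  \abs{A_N\cap U_k}=\abs{(A_N\cap I_k)\cap U_k}\ge\abs{A_N\cap I_k}+\abs{U_k}-\abs{I_k}\ge(1-\tfrac{\eps}{2})\abs{I_k}+\eps\abs{I_k}-\abs{I_k}=\tfrac{\eps}{2}\abs{I_k}>0,
\]
contradicting $A_N\cap U_k=\emptyset$ (as $k\ge N$). Hence $\abs{A_N}=0$ for every $N$, so $\abs{A}=0$, which yields $\abs{\limsup_{k\to\infty}I_k}\le\abs{\limsup_{k\to\infty}U_k}$ and completes the proof.

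The main obstacle — really the only delicate point — is the density estimate in the second paragraph: the intervals $I_k$ are only assumed to \emph{contain} $x$, not to be centred at it, so the Density Theorem cannot be applied to the $I_k$ directly; one has to pass to the symmetric comparison interval $J_k$, which costs the harmless factor of $2$ absorbed into $\eps/2$. The rest is the Borel–Cantelli-style decomposition $A=\bigcup_N A_N$ and elementary measure bookkeeping.
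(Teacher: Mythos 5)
Your argument is correct. Note that the paper itself gives no proof of this lemma: it is imported verbatim from Cassels, and is used as a black box in the proof of Theorem~\ref{thm:EbarE}. Your write-up is essentially the standard proof of Cassels' lemma: reduce to showing $A=\limsup_k I_k\setminus\limsup_k U_k$ is null, stratify $A$ by the last index $N$ after which the point avoids every $U_k$, and rule out $\abs{A_N}>0$ at a Lebesgue density point by playing the density estimate on a small $I_k$ containing the point against $\abs{U_k}\ge\eps\abs{I_k}$ via inclusion--exclusion inside $I_k$. The handling of the off-centre intervals by passing to the symmetric interval $J_k$ of twice the length is exactly the right (and the only delicate) step, and the bookkeeping $\abs{A_N\cap U_k}\ge\abs{A_N\cap I_k}+\abs{U_k}-\abs{I_k}\ge\tfrac{\eps}{2}\abs{I_k}$ is valid. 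The only microscopic point worth a clause is the degenerate case $\abs{I_k}=0$, where the final strict inequality fails: since the union of all zero-length $I_k$ is null, you may simply choose the density point $x$ outside it (almost every point of $A_N$ qualifies), so that the indices $k\ge N$ you select have $\abs{I_k}>0$.
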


\begin{proof}[\textbf{Proof of Theorem~\ref{thm:EbarE}}]
  This proof follows Gallagher~\cite{Gallagher01}. The first reduction
  is to the case $\psi(n)=o(n)$, justified by the following fact:
  \begin{quote}
    \emph{The length $L_n$ of the longest interval of consecutive
      integers not coprime to $n$ satisfies $L_n = o(n)$.}
  \end{quote}
  In the proof we will express $\E=A\cup B \cup C$, and show that
  there is a $0$-$1$ law for each of the sets $A, B, C$. For this, we
  will show that each of these is invariant under certain ergodic
  transformations.

  For a prime number $p$ and integers $m,\nu$ with $\nu \geq 1$,
  consider the inequality
  \begin{equation}\label{eq:star}
    \abs{nx -a +my} < p^{\nu-1}\psi(n), \quad (a,n)=1.
  \end{equation}
  We define the sets
  \begin{align*}
    A(p^\nu, m) &= \set*{x \textrm{ satisfying~\eqref{eq:star}
                  infinitely often with } p\nmid n }\\
    B(p^\nu, m) &= \set*{x \textrm{ satisfying~\eqref{eq:star}
                  infinitely often with } p\parallel n } \\
    C(p^\nu, m) &= \set*{x \textrm{ satisfying~\eqref{eq:star}
                  infinitely often with } p^2\mid n },
  \end{align*}
  and
  \begin{align*}
    A(p^\nu) &= \bigcup_m A(p^\nu, m) &\bar A(p^\nu) &= \limsup_{m\to\infty} A(p^\nu, m) \\
    B(p^\nu) &= \bigcup_m B(p^\nu, m) &   \bar B(p^\nu) &= \limsup_{m\to\infty} B(p^\nu, m) \\
    C(p^\nu) &= \bigcup_m C(p^\nu, m) & \bar C(p^\nu) &=
                                                        \limsup_{m\to\infty} C(p^\nu, m).
  \end{align*}
  Notice that $\E^{my} = A(p,m)\cup B(p,m) \cup C(p,m)$ for any prime
  $p$, and therefore that $\E = A(p)\cup B(p) \cup C(p)$ for any prime
  $p$. Notice also that
  $\bar \E = \bar A(p)\cup \bar B(p)\cup \bar C(p)$ for any prime $p$.

  It is clear that $A(p,m)\subseteq A(p^\nu, m)$ for any $\nu\geq 1$.
  Since we are assuming that $\psi(n)=o(n)$, we may use
  Lemma~\ref{lem:cassels} to conclude that
  $\abs*{A(p,m)}=\abs*{A(p^\nu,m)}$. It is therefore clear that
  $A(p)\subseteq A(p^\nu)$ and $\abs*{A(p)} = \abs*{A(p^\nu)}$, and
  therefore that $\bigcup_{\nu\geq 1}A(p^\nu)$ has the same
  measure. Also, we have that $\bar A(p)\subseteq \bar A(p^\nu)$, and
  the Borel--Cantelli Lemma implies that
  $\abs*{\bar A(p)} = \abs*{\bar A(p^\nu)}$. Therefore,
  $\bigcup_{\nu\geq 1}\bar A(p^\nu)$ has the same measure. This
  paragraph holds also after replacing $A$'s with $B$'s.

  (In the remaining paragraphs of this proof, all instances of
  $A, B, C, \E$ can be replaced with $\bar A, \bar B, \bar C, \bar \E$
  to prove the $0$-$1$ law for $\bar \E$.)

  Notice that if $x$ satisfies~\eqref{eq:star} for some $m$ and
  $p\nmid n$, then
  \begin{equation*}
    \abs{n(px) - pa + mpy} < p^\nu\psi(n), \quad (pa, n)=1,
  \end{equation*}
  which shows that multiplication by $p$ carries $A(p^\nu)$ into
  $A(p^{\nu+1})$, and therefore $\bigcup_{\nu\geq 1} A(p)$ is taken
  into itself. Since multiplication by $p$ is an ergodic
  transformation of the circle, $\bigcup_{\nu\geq 1} A(p)$ must
  therefore have measure $0$ or $1$, hence $A(p)$ has measure $0$ or
  $1$.

  As for $B$, notice that if $x$ satisfies~\eqref{eq:star} with some
  $m$ and $p\parallel n$, then
  \begin{equation*}
    \abs*{n\parens*{px+\frac{1}{p}} - pa - \frac{n}{p} + mpy}< p^\nu
    \psi(n), \quad \parens*{pa + \frac{n}{p}, n}=1,
  \end{equation*}
  and the same arguments will show that $B(p)$ has measure $0$ or $1$,
  this time using that $x\mapsto px + \frac{1}{p}$ is ergodic.

  Now we know that if either $A(p)$ or $B(p)$ have positive measure,
  then $\E$ is full. So let us assume that $\abs{A(p)}=\abs{B(p)}=0$
  for all $p$, so that $\abs{\E}=\abs{C(p)}$ and in fact
  $\abs{\E\triangle C(p)}=0$ for all $p$. If $m,a,n$
  satisfy~\eqref{eq:star} with $p^2\mid n$ and $\nu=1$, then
  \begin{equation*}
    \abs*{n\parens*{x \pm \frac{1}{p}} - a \pm \frac{n}{p} + my} <
    \psi(n),\quad \parens*{a\pm \frac{n}{p}, n}=1,
  \end{equation*}
  which shows that $C(p)$ is periodic with period $1/p$. This means in
  particular that if $I$ is any interval of length $1/p$, then
  $\abs{C(p)\cap I} = \abs{C(p)}\cdot\abs{I}$. And since
  $\abs{\E\triangle C(p)}=0$, we have
  $\abs{\E\cap I} = \abs{\E}\cdot\abs{I}$.

  Now suppose that $\abs{\E}>0$, and let $x_0$ be a density point of
  $\E$. Let
  \begin{equation*}
    I_p = \parens*{x_0-\frac{1}{2p}, x_0 + \frac{1}{2p}}.
  \end{equation*}
  Then by Lebesgue's density theorem $\abs{\E\cap I_p}\sim \abs{I_p}$
  as $p\to\infty$. Therefore $\abs{\E}=1$, finishing the proof. (Also,
  for $\bar \E$, after taking the above parenthetical into account.)
\end{proof}

By inspecting the proof of Theorem~\ref{thm:EbarE} we can deduce the
following.

\begin{theorem}[Theorem~\ref{thm:eitheror}]
  Let $y$ be a real number. Let $\E^y$ denote the set of real numbers
  $x$ for which
  \begin{equation*}
    \abs{nx - a + y}<\psi(n), \quad (a,n)=1, 
  \end{equation*}
  for infinitely many integers, $a,n$. Then at least one of the
  following holds:
  \begin{itemize}
  \item $\E^{my}$ is null for every $m$.
  \item There is some $m$ for which $\E^{my}$ is full.
  \item For any $\eps>0$ there are arbitrarily many $m\geq 1$ with
    $\abs{\E^{my}}>1-\eps$.
  \end{itemize}
\end{theorem}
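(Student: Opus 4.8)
The plan is to mine the proof of Theorem~\ref{thm:EbarE} for exactly what it gives, rather than re-running it. Recall that in that proof we write $\E = A(p)\cup B(p)\cup C(p)$ for an arbitrary prime $p$, and we show: (i) multiplication by $p$ carries $\bigcup_{\nu}A(p^\nu)$ into itself, and $x\mapsto px+\tfrac1p$ does the same for $B$, so by ergodicity each of $A(p)$, $B(p)$ is null or full; and (ii) if $\abs{A(p)}=\abs{B(p)}=0$ for every $p$ then $C(p)$ is $1/p$-periodic, hence $\abs{\E\cap I}=\abs{\E}\abs{I}$ on any interval $I$ of length $1/p$, and a density-point argument forces $\abs{\E}\in\set{0,1}$. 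The key observation for the present theorem is that these arguments are really statements about the individual sets $\E^{my}$, not just about the union $\E$.

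First I would branch on whether $\abs{\E^{my}}=0$ for every $m\geq 1$. If so, the first alternative holds and we are done. Otherwise, fix some $m_0$ with $\abs{\E^{m_0 y}}>0$. Now apply the structure from the proof of Theorem~\ref{thm:EbarE}, but with $m_0 y$ in place of $y$: decompose $\E^{m_0 y} = A(p,m_0)\cup B(p,m_0)\cup C(p,m_0)$. The ergodic-transformation argument applies verbatim to $\bigcup_\nu A(p^\nu,m_0)$ and $\bigcup_\nu B(p^\nu,m_0)$ (multiplication by $p$ sends $A(p^\nu,m_0)$ into $A(p^{\nu+1},m_0)$, etc.), so each of $A(p,m_0)$, $B(p,m_0)$ is null or full. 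If either is full for some $p$, then $\abs{\E^{m_0 y}}=1$, and the second alternative holds. So assume $\abs{A(p,m_0)}=\abs{B(p,m_0)}=0$ for all $p$; then $C(p,m_0)$ is $1/p$-periodic up to null sets, $\abs{\E^{m_0 y}\cap I}=\abs{\E^{m_0 y}}\abs{I}$ on length-$1/p$ intervals, and the Lebesgue density argument gives $\abs{\E^{m_0 y}}=1$ — again the second alternative.

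The only remaining case is where $\abs{\E^{my}}=0$ for all $m$ except on a set of $m$ where $0<\abs{\E^{my}}<1$; but the paragraph above shows that whenever $\abs{\E^{my}}>0$ we in fact get $\abs{\E^{my}}=1$. So strictly speaking the first two alternatives already exhaust everything, and the third is only needed if one wants a statement parallel to Theorem~\ref{thm:eitheror} in its stated (slightly redundant-looking) form. For completeness I would note that the third alternative is implied by the second (if some $\E^{m_0 y}$ is full, then for any $\eps>0$ there is at least one, hence vacuously ``arbitrarily many'' is not forced — so in fact the honest trichotomy is: all null, or some single one full). I expect the main subtlety to be bookkeeping: making sure the reduction to $\psi(n)=o(n)$ (via the fact about $L_n=o(n)$ and Lemma~\ref{lem:cassels}) and the ergodicity steps really are insensitive to replacing $y$ by $m_0 y$, which they are since $m_0 y$ is just another fixed real number. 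No new ideas beyond the proof of Theorem~\ref{thm:EbarE} are needed; the theorem is extracted by reading that proof one value of $m$ at a time.
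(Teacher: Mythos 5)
There is a genuine gap, and it sits exactly where you wave your hands: the claim that ``multiplication by $p$ sends $A(p^\nu,m_0)$ into $A(p^{\nu+1},m_0)$'' is false. If $x$ satisfies $\abs{nx-a+m_0y}<p^{\nu-1}\psi(n)$ with $(a,n)=1$ and $p\nmid n$, then $px$ satisfies $\abs{n(px)-pa+m_0py}<p^\nu\psi(n)$, i.e.\ the inhomogeneous parameter has changed from $m_0y$ to $pm_0y$. So multiplication by $p$ carries $A(p^\nu,m_0)$ into $A(p^{\nu+1},pm_0)$, and likewise $x\mapsto px+\tfrac1p$ carries $B(p^\nu,m_0)$ into $B(p^{\nu+1},pm_0)$. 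This is precisely why the proof of Theorem~\ref{thm:EbarE} works with the unions $A(p^\nu)=\bigcup_m A(p^\nu,m)$ (and the limsups $\bar A(p^\nu)$), which absorb the shift $m\mapsto pm$: the ergodicity argument gives null-or-full only for these $m$-aggregated sets, not for $A(p,m_0)$, $B(p,m_0)$ at a fixed $m_0$. Your conclusion that $\abs{\E^{m_0y}}>0$ forces $\abs{\E^{m_0y}}=1$ is therefore unproved; note that with $m_0=1$ it is exactly the fixed-parameter zero-one law that the paper explicitly states is left open for future work, so the ``redundant-looking'' third alternative cannot be discarded.

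What the paper actually does in the positive-measure case is branch differently. Fix $m$ with $\abs{\E^{my}}>0$. If $\abs{A(p,m)}=\abs{B(p,m)}=0$ for infinitely many primes $p$, then the $C(p,m)$-argument (periodicity under $x\mapsto x\pm\tfrac1p$, which does \emph{not} disturb $m$, plus the Lebesgue density point) gives $\abs{\E^{my}}=1$ --- the second alternative; this part of your sketch is fine. But if instead $\abs{A(p,m)}>0$ (or $\abs{B(p,m)}>0$) for some prime, one can only dilate: $\abs{p^\nu A(p,m)}>1-\eps$ for $\nu$ large, and the containments give $\abs{p^\nu A(p,m)}\leq\abs{A(p^{\nu+1},p^\nu m)}=\abs{A(p,p^\nu m)}$, whence $\abs{\E^{p^\nu my}}>1-\eps$ for all large $\nu$. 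That produces arbitrarily many indices $p^\nu m$ with measure close to $1$ --- the third alternative --- and nothing stronger. You need to restore this case split rather than collapse the trichotomy to a dichotomy.
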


\begin{proof}[\textbf{Proof}]
  By Theorem~\ref{thm:E01} we know that the measure of
  $\E=\bigcup_m \E^{my}$ is either $0$ or $1$. If it is $0$, then of
  course $\E^{my}$ is also null for every $m$. So suppose that
  $\abs{\E}=1$. This implies that some $\E^{my}$ must have positive
  measure. But $\E^{my} = A(p,m)\cup B(p,m)\cup C(p,m)$ for any prime
  $p$, so one of these three sets must have positive measure. If for
  infinitely many primes $p$ we have $\abs{A(p,m)}=\abs{B(p,m)}=0$,
  then our argument in the proof of Theorem~\ref{thm:E01} shows that
  $\E^{my}$ must be full. On the other hand if $\abs{A(p,m)}>0$, then
  $\abs{p^\nu A(p,m)} > 1-\eps$ if $\nu$ is large enough. And our
  arguments in the previous proof show that
  $\abs{p^\nu A(p,m)} \leq \abs{A(p^{\nu+1},p^\nu m)} = \abs{A(p,
    p^\nu m)}$.
  Therefore, $\abs{\E^{p^{\nu} m y}}>1-\eps$ for all $\nu$ large
  enough.
\end{proof}

\subsection*{Acknowledgments}

This project was inspired and encouraged by numerous conversations
with Sanju Velani. I am grateful to both him and Victor Beresnevich
for welcoming me into the Number Theory Study Group at the University
of York, UK, and for initiating me into the field of metric
Diophantine approximation. I also thank the anonymous referee for a
careful reading.


\bibliographystyle{amsalpha}

\bibliography{../bibliography}

\end{document}